\documentclass[12pt,reqno,twoside]{amsart}

\usepackage{amsthm,amsmath,amssymb,mathrsfs,amsbsy}
\usepackage{graphicx,epsfig,wrapfig,sidecap,subfig}
\usepackage[all]{xy}
\usepackage{proof}
\usepackage[usenames]{color}
\usepackage{hyperref}
\usepackage{bm}
\usepackage[capitalize]{cleveref}

\setlength{\textwidth}{400pt}
\setlength{\hoffset}{-1cm}
\setlength{\textheight}{570pt}
\setlength{\voffset}{-0.3cm}

\newcommand\name[1]{\label{#1}{\ifdraft{\sn [#1]}\else\ignorespaces\fi}}

\newcommand\eq[2]{{\ifdraft{\ \tt [#1]}\else\ignorespaces\fi}\begin{equation}\label{#1}{#2}\end{equation}}
\newcommand {\equ}[1]{\eqref{#1}}

\newcommand{\spa}{{\rm span}}

\newcommand{\diag}{{\rm diag}}
\newcommand{\diam}{{\rm diam}}

\font\sn = cmssi8 scaled \magstep0

\long\def\combarak#1{\ifdraft{\color{red}\sn Barak says ``#1'' }\else\ignorespaces\fi}

\xyoption{dvips}
\xyoption{color}

\newif\ifdraft\drafttrue
\draftfalse
\usepackage{color}

\title[Problems of Danzer and Gowers and dynamics]{On problems of Danzer and Gowers and dynamics on the space of closed subsets of $\R^d$}
\date{}
\author{Omri Solan}
\address{School of Mathematical Sciences, Tel Aviv University, Israel}
\email{omrisola@mail.tau.ac.il}
\author{Yaar Solomon}
\address{Department of Mathematics, Stony Brook University, Stony Brook, NY}
\email{yaar.solomon@stonybrook.edu}
\urladdr{\url{http://www.math.stonybrook.edu/~yaars/}}
\author{Barak Weiss}
\address{School of Mathematical Sciences, Tel Aviv University, Israel}
\email{barakw@post.tau.ac.il}
\urladdr{\url{http://www.math.tau.ac.il/~barakw/}}

\newcommand{\N}{{\mathbb{N}}}

\newcommand{\vre}{\varepsilon}
\newcommand{\Z}{{\mathbb{Z}}}

\newcommand{\R}{{\mathbb{R}}}

\newcommand{\ee}{{\mathbf{e}}}

\newcommand{\xx}{\mathbf{x}}
\newcommand{\yy}{\mathbf{y}}
\newcommand{\pp}{\mathbf{p}}
\newcommand{\X}{\mathcal{X}}
\newcommand{\Y}{\mathcal{Y}}

\newcommand{\df}{{\, \stackrel{\mathrm{def}}{=}\, }}

\newcommand{\absolute}[1] {\left|{#1}\right|}

\newcommand{\norm}[1]{\left\|{#1}\right\|}

\newcommand{\Vol}{\mathrm{Vol}}

\DeclareMathOperator{\SL}{SL}

\newcommand {\ignore}[1]  {}

\theoremstyle{plain}
\newtheorem{thm}{Theorem}[section]
\newtheorem{lem}[thm]{Lemma}
\newtheorem{prop}[thm]{Proposition}
\newtheorem{cor}[thm]{Corollary}

\newtheorem{ques}[thm]{Question}

\theoremstyle{definition}

\newtheorem{question}[thm]{Question}

\numberwithin{equation}{section}
\swapnumbers

\begin{document}

\begin{abstract}
Considering the space of closed subsets of $\R^d$, endowed with the
Chabauty-Fell topology, and the affine action of
$\SL_d(\R)\ltimes\R^d$, we prove that the only minimal subsystems are
the fixed points $\{\varnothing\}$ and $\{\R^d\}$. As a consequence we
resolve a question of Gowers concerning the existence of certain
Danzer sets: there is no set $Y \subset \R^d$ such that for every
convex set $\mathcal{C} \subset \R^d$ of volume one, the cardinality
of $\mathcal{C} \cap Y$ is bounded above and below by nonzero contants
independent of $\mathcal{C}$. We also provide a short independent
proof of this fact and deduce a quantitative consequence: for every
$\varepsilon$-net $N$ for convex sets in $[0,1]^d$ there is a convex
set of volume $\varepsilon$ containing at least
$\Omega(\log\log(1/\varepsilon))$ points of $N$.  
\end{abstract}

\maketitle

\section{Introduction}\label{sec:introduction}\label{sec:Intro}
A set $Y\subseteq\R^d$ is called a \emph{Danzer set} if there exists
an $s>0$ such that $Y$ intersects every convex set of volume $s$. The
following question is due to Danzer, and is open since the sixties,
see \cite{Fenchel}, \cite[p. 148]{CFG},  \cite[p. 288]{GL}: is there a
Danzer set $Y\subseteq\R^d\:\: (d\ge 2)$ such that 
with a bounded upper density,
i.e. with 
\[\limsup_{R\to\infty}\frac{\#(B(0,R)\cap Y)}{R^d} < \infty ?\]

Different authors have asked about variants of Danzer's problem. 
In \cite{Gowers} Gowers asked whether  there exists a set $Y\subseteq\R^d$, and $C>0$, such that for every convex set $K$ of volume $1$ we have 
\[1\le\#(K\cap Y)\le C.\]

In this paper we answer Gowers' question negatively, namely: 
\begin{thm}\label{thm:No_Gowers-Danzer_set}
Let $Y\subseteq\R^d$ be a Danzer set, then for every $n\in\N$ and $\varepsilon>0$ there is an ellipsoid $E_n$ with $\Vol(E_n)<\varepsilon$ and $\#(E_n\cap Y)\ge n$. 
\end{thm}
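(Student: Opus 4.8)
\emph{Proof plan.} The statement is meant to be read as a consequence of the minimality result announced in the abstract: in the space $\mathcal F$ of closed subsets of $\R^d$ with the Chabauty--Fell topology, under the affine $\SL_d(\R)\ltimes\R^d$-action, the only minimal subsystems are $\{\varnothing\}$ and $\{\R^d\}$. Granting this, I would argue as follows. We may assume $Y$ is closed (passing to $\overline Y$ only increases $E\cap Y$, and dilating an ellipsoid slightly converts $n$ points of $\overline Y$ into $n$ points of $Y$ by approximating each one with a nearby point of $Y$ inside disjoint neighbourhoods), so $Y$ is a point of the compact space $\mathcal F$. Its orbit closure $\mathcal K:=\overline{G\cdot Y}$, where $G:=\SL_d(\R)\ltimes\R^d$, is a nonempty compact invariant set, hence contains a minimal subsystem; by the minimality theorem that subsystem is $\{\varnothing\}$ or $\{\R^d\}$, so $\mathcal K$ contains $\varnothing$ or $\R^d$.

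Next I would rule out $\varnothing$. If $Y$ is Danzer with constant $s$, then so is $gY+v$ for every $(g,v)\in G$, since $g^{-1}(C-v)$ is a convex body of volume $s$ whenever $C$ is; in particular, taking $C=\overline{B(0,r_s)}$ with $\Vol(B(0,r_s))=s$, every point of the orbit $G\cdot Y$ meets the fixed compact set $\overline{B(0,r_s)}$. Since $\{Z\in\mathcal F : Z\cap K\ne\varnothing\}$ is closed for every compact $K$, every element of $\mathcal K$ meets $\overline{B(0,r_s)}$, and in particular $\varnothing\notin\mathcal K$. Therefore $\R^d\in\mathcal K$, i.e.\ there are $g_k\in\SL_d(\R)$ and $v_k\in\R^d$ with $g_kY+v_k\to\R^d$, which unwinds to the statement that for every compact $K$ and every $\delta>0$, the set $g_kY+v_k$ is $\delta$-dense in $K$ for all large $k$.

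Finally, given $n\in\N$ and $\varepsilon>0$, I would choose $r>0$ with $\Vol(B(0,r))<\varepsilon$, then choose $\delta\in(0,r/2)$ small enough that $B(0,r/2)$ contains points $x_1,\dots,x_n$ that are pairwise $3\delta$-separated, and apply the convergence above with $K=\overline{B(0,r/2)}$: for all large $k$ each $x_i$ lies within $\delta$ of some $y_i\in g_kY+v_k$, and then the $y_i$ are $n$ distinct points (pairwise distance $\ge 3\delta-2\delta>0$) lying in $B(0,r)$. Fixing such a $k$ and pulling back, $E_n:=g_k^{-1}\bigl(B(0,r)-v_k\bigr)$ is an ellipsoid (an invertible affine image of a ball), with $\Vol(E_n)=\Vol(B(0,r))<\varepsilon$ because $g_k^{-1}\in\SL_d(\R)$ preserves volume, and $\#(E_n\cap Y)\ge n$ since each $y_i$ is $g_k z_i+v_k$ with $z_i\in Y\cap E_n$.

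The only real difficulty in this route is the minimality theorem itself, which is the heart of the paper; the deduction above is soft topological dynamics. One can alternatively argue directly: assuming for contradiction that for some $N$ and $\varepsilon_0>0$ every ellipsoid of volume $<\varepsilon_0$ meets $Y$ in at most $N$ points, one seeks a convex body of volume $s$ disjoint from $Y$ — typically a very eccentric ellipsoid (a thin slab truncated to a large ball) threaded between the points of $Y$ by an averaging argument over the space of such slabs. The subtle point there is that the crude density bound $N/\varepsilon_0$ is by itself insufficient, so the argument must genuinely exploit the absence of clusters in \emph{all} thin ellipsoidal shapes and appears to require iteration — essentially a hands-on version of the minimality phenomenon.
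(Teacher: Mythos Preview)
Your argument is correct and matches the paper's dynamical proof (Proof~I): the paper phrases the same deduction via Theorem~\ref{thm:classifying_minimal_subsystems} and Lemma~\ref{Lemma:Danzer_dynamical_characteriztion} (together with John's theorem to pass from convex sets to ellipsoids), whereas you go via the minimal-subsystem corollary and an explicit convergence-to-$\R^d$ argument that produces the ellipsoid directly as $g_k^{-1}(B(0,r)-v_k)$, neatly sidestepping John's theorem. The paper also supplies a second, self-contained proof by induction on $n$ --- essentially the ``hands-on'' route you sketch at the end --- building $E_{n+1}$ from $E_n$ by mapping $E_n$ to a small ball, picking one new point of $Y$ nearby, and enclosing both in a slightly larger ellipsoid.
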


In fact we give two different proofs of Theorem
\ref{thm:No_Gowers-Danzer_set}. Our first proof is dynamical. We
denote by $\X$ the space of closed subsets of $\R^d$ equipped with the
Chabauty-Fell topology, and let $G\df\SL_d(\R)\ltimes\R^d$ denote the
group of affine transformations of $\R^d$ which preserve Lebesgue
measure and orientation. The action of this group on $\R^d$ induces a
natural action on $\X$. Denote by $$
U_0 \df \left\{ u(\mathbf{a}): \mathbf{a} \in
  \R^{d-1} \right \}, $$ 
where 
\begin{equation}\label{eq: def U}
u(\mathbf{a}) \df
\begin{pmatrix} 1&a_2&a_3&\cdots& a_{d}\\ 0&1&0&\cdots&0 \\
  \vdots& &\ddots& &\vdots \\ \vdots&& &\ddots&0 \\
  0&\cdots&\cdots&0&1 \end{pmatrix}, \ \ \text{ for } \mathbf{a}  = (a_2, \ldots, a_{d}) ,
\end{equation}
and let $U = U_0 \ltimes \R^d$. 
Also let 
\eq{eq: defn gt}{
g_t \df \diag \left (e^{(d-1)t}, e^{-t}, \ldots, e^{-t} \right),
}
let $H_0$
be the subgroup of $\SL_d(\R)$ generated by $\{g_t: t \in \R\}$ and $U_0$, and
let $H  \df H_0 \ltimes \R^d$.

Our main result about this dynamical system is:
\begin{thm}\label{thm:classifying_minimal_subsystems}
For every $F\in\X, $ either $\varnothing\in \overline{H.F}$ or $\R^d\in\overline{U.F}$.
\end{thm}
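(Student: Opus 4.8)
Since $F=\varnothing$ gives the conclusion trivially, assume $F\ne\varnothing$ and $\varnothing\notin\overline{H.F}$; the goal is $\R^d\in\overline{U.F}$. First I would extract a quantitative ``non-escape'' statement. The space $\X$ is compact in the Chabauty--Fell topology, the sets $\{F'\in\X:F'\cap B(0,r)\ne\varnothing\}$ ($r>0$) are open and exhaust $\X\minus\{\varnothing\}$, and $\overline{H.F}$ is compact; hence there is $r_0>0$ with $h.F\cap B(0,r_0)\ne\varnothing$ for all $h\in H$, equivalently $F$ meets $h.B(0,r_0)$ for all $h\in H$. Taking $h\in\R^d$ gives $r_0$-syndeticity of $F$; taking $h=(\text{translation})\circ g_t\circ u(\mathbf a)$ shows that $F$ meets every translate of the ellipsoid $g_tu(\mathbf a)B(0,r_0)$ for all $t$ and $\mathbf a$. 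A consequence I would record: for each $\mathbf a\in\R^{d-1}$ the functional $\ell_{\mathbf a}(x)=x_1-\sum_{i\ge 2}a_ix_i$ has dense image on $F$ — otherwise an empty slab $\{\alpha<\ell_{\mathbf a}<\beta\}$, after applying $u(-\mathbf a)$ (which turns it into an honest $e_1$-slab), a translation, and $g_t$ with $t\to+\infty$ (which dilates that slab without bound), would exhibit an empty ball of arbitrarily large radius, i.e.\ $\varnothing\in\overline{H.F}$. I would also record two invariances: since $U$ is a normal subgroup of $H$ and every $h\in H$ is a bijection of $\R^d$, writing $h=g_tu_0$ with $u_0\in U$ one has $\overline{U.(h.F)}=g_t\,\overline{U.F}$, so $\R^d\in\overline{U.F}\iff\R^d\in\overline{U.(h.F)}$; and of course $\varnothing\in\overline{H.F}\iff\varnothing\in\overline{H.(h.F)}$. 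Thus $F$ may be replaced by any point of its $H$-orbit.

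The argument then has two parts. \emph{Part A:} every $H$-minimal subset $\mathcal M\subseteq\X$ with $\varnothing\notin\mathcal M$ equals $\{\R^d\}$. \emph{Part B:} if $\varnothing\notin\overline{H.F}$ and $\R^d\in\overline{H.F}$ then $\R^d\in\overline{U.F}$. These suffice: choosing (Zorn) an $H$-minimal $\mathcal M\subseteq\overline{H.F}$, we have $\varnothing\notin\mathcal M$, so Part A gives $\R^d\in\mathcal M\subseteq\overline{H.F}$, and then Part B concludes. For Part B I would write an approximating sequence $h_n=g_{t_n}u_n$ with $h_n.F\to\R^d$; if $\{t_n\}$ stays bounded, passing to a subsequence with $t_n\to t$ yields $u_n.F=g_{-t_n}(h_n.F)\to g_{-t}\R^d=\R^d$, done. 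The case $t_n\to\pm\infty$ is the subtle one: here one must use the non-escape hypothesis — which prevents $u_n.F$ from collapsing onto a proper affine subspace — together with the self-similarity $\overline{U.(g_t.F)}=g_t\overline{U.F}$, to trade the approximating sequence for one whose $g_t$-component is bounded. (One could also merge Part B into Part A by running the Part-A argument with $U$-minimal subsets of $\mathcal M$ and tracking how $g_t$ conjugates $U_0$, namely $g_tu(\mathbf a)g_{-t}=u(e^{dt}\mathbf a)$.)

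For Part A: let $\mathcal M$ be $H$-minimal with $\varnothing\notin\mathcal M$ and suppose some $F'\in\mathcal M$ is not $\R^d$; after a translation (which keeps us in $\mathcal M$) assume $F'\cap B(0,\rho)=\varnothing$. Note $\mathcal M$ is compact and $\varnothing$-free, so the constant $r_0$ above can be taken uniform over $\mathcal M$. Now renormalize: applying $g_s$ with $s\to-\infty$ turns $B(0,\rho)$ into a ``pancake'' of $e_1$-thickness $\to0$ and $\mathbf x'$-radius $\to\infty$; passing to a Chabauty--Fell limit of a subsequence of $g_s.F'\in\mathcal M$ one obtains $G\in\mathcal M$ with $G\cap\{x_1=c\}=\varnothing$ for some $c$, i.e.\ $G$ misses a whole hyperplane transverse to $e_1$. (Symmetrically, $s\to+\infty$ produces an element of $\mathcal M$ missing a tube around the $e_1$-axis.) The contradiction should now come by upgrading ``misses a transverse hyperplane'' to ``some element of $\mathcal M$ misses an open slab'': such a set escapes to $\varnothing$ under $g_t$ ($t\to+\infty$), contradicting $\varnothing\notin\overline{H.F}$ and hence proving $\mathcal M=\{\R^d\}$.

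The \textbf{main obstacle} is this last upgrade. The empty hyperplane $\{x_1=c\}$ obtained as a limit need not be thickened to an empty slab, because Chabauty--Fell convergence along the unbounded sequence $g_{s_n}$ is very non-uniform — $G$ may only accumulate on $\{x_1=c\}$ from one side. To force an open gap one must use non-escape uniformly over all of the minimal set $\mathcal M$ (not just along a single orbit) and combine it with the precise way the forward ($t\to+\infty$) and backward ($t\to-\infty$) $g_t$-flows act on the three degenerate configurations in play — empty ball, empty $e_1$-axis tube, empty transverse hyperplane — in a recurrence argument on $\mathcal M$ that drives the gap open. The analogous non-uniformity is also what must be controlled in Part B in order to bound the $g_t$-component of the approximating sequence. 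This quantitative handling of renormalized Chabauty--Fell limits is where the bulk of the work lies.
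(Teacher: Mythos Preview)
Your proposal has a genuine gap, and you have correctly located it: the ``upgrade'' in Part~A from an empty hyperplane to an empty open slab, and the unbounded-$t_n$ case in Part~B, are left as wishes (``where the bulk of the work lies''). In Part~A, the Chabauty--Fell limit $G$ along $g_s$ ($s\to-\infty$) may well be $\R^d$ itself, which by minimality would finish --- but when it is not, the empty set you have produced is a hyperplane of zero volume, and nothing in the Danzer-for-$H$ hypothesis forbids a closed set from accumulating on a hyperplane from both sides; the recurrence mechanism that would thicken this to a slab is never specified. In Part~B, from $g_{t_n}u_n.F\to\R^d$ with $t_n\to+\infty$ one only learns that $u_n.F$ is dense in the thin pancake $g_{-t_n}B(0,R)$, which says nothing about density in a fixed ball; the appeal to ``non-escape'' and ``self-similarity'' is not an argument.

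The paper's route is entirely different and sidesteps both obstacles by being \emph{constructive} rather than by contradiction. The key observation you are missing is how $U_0$ acts: it shears along horizontal lines, fixing the $x_1$-axis pointwise, and if $\pp$ lies off the axis (i.e.\ $P_d(\pp)\ne 0$) then a suitable $u\in U_0$ moves $\pp$ to have any prescribed $x_1$-coordinate while leaving $P_d(\pp)$ unchanged. Combined with the Danzer-for-$H$ property --- which guarantees that $S$ meets every translate of the thin tube $g_t\overline{B_{r_0}}$, hence contains points with $0<\|P_d(\pp)\|<\eta$ for any $\eta$ --- one can place a point of some $u.S$ within $\eta$ of any prescribed point on the $x_1$-axis. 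Iterating over a finite $\varepsilon$-grid on the axis and letting $\eta\to0$ yields $Y\in\overline{U_0.S}$ containing the whole $x_1$-axis. A second, outer induction then builds from $W_1=\mathrm{span}(\ee_1)$ to $W_k=\mathrm{span}(\ee_1,\ldots,\ee_k)$: translate by $\varepsilon \ee_{k+1}$ (a translation in $U$), rerun the construction to stack another parallel copy of $W_k$, repeat and take limits. At $k=d$ one obtains $\R^d\in\overline{(U_0\ltimes V_d).F}\subseteq\overline{U.F}$ directly --- no $H$-minimal sets, and Part~B never arises.
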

As we will show, 
Theorem \ref{thm:No_Gowers-Danzer_set} is a straightforward
consequence of Theorem \ref{thm:classifying_minimal_subsystems}.  
Theorem
\ref{thm:classifying_minimal_subsystems} also immediately yields a classification of the
minimal subsystems of the dynamical system $(\X,G)$. Recall that a
subset $\Y\subseteq \X$ is \emph{minimal} if it is non-empty, closed,
$G$-invariant, and minimal with respect to inclusion with these
properties. Fixed points are obvious examples of minimal subsystems. 
The following is an
immediate consequence of Theorem \ref{thm:classifying_minimal_subsystems}:

\begin{cor}\label{cor:classifying_minimal_subsystems}
The only minimal subsystems of $(\X,G)$ are the fixed points $\{\varnothing\}$ and $\{\R^d\}$.  
\end{cor}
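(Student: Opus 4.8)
The plan is to deduce the corollary directly from Theorem \ref{thm:classifying_minimal_subsystems} by a purely soft topological-dynamical argument. First I would record that $\varnothing$ and $\R^d$ are closed subsets of $\R^d$, hence points of $\X$, and that each is fixed by every element of $G$: an affine bijection of $\R^d$ carries $\varnothing$ to $\varnothing$ and maps $\R^d$ onto $\R^d$. Therefore $\{\varnothing\}$ and $\{\R^d\}$ are non-empty, closed, $G$-invariant one-point sets, and thus are minimal subsystems; this handles one direction, and it remains to show that no other minimal subsystem exists.

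Next, let $\Y\subseteq\X$ be an arbitrary minimal subsystem and fix any $F\in\Y$. Since the $G$-action on $\X$ is by homeomorphisms and $\Y$ is closed and $G$-invariant, the orbit closure $\overline{G.F}$ is a non-empty closed $G$-invariant subset of $\Y$. Because $H\le G$ and $U\le G$, we have $\overline{H.F}\subseteq\overline{G.F}$ and $\overline{U.F}\subseteq\overline{G.F}$. Applying Theorem \ref{thm:classifying_minimal_subsystems} to $F$ then gives $\varnothing\in\overline{G.F}\subseteq\Y$ or $\R^d\in\overline{G.F}\subseteq\Y$.

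Finally, in the first case $\{\varnothing\}$ is a non-empty closed $G$-invariant subset of $\Y$, so minimality of $\Y$ forces $\Y=\{\varnothing\}$; in the second case the identical argument with $\R^d$ in place of $\varnothing$ yields $\Y=\{\R^d\}$. This completes the proof. There is essentially no obstacle here: the only things to check are that orbit closures are $G$-invariant (which uses only continuity of the action with respect to the Chabauty--Fell topology) and the trivial inclusions $H,U\le G$, while all the substantive work is carried by Theorem \ref{thm:classifying_minimal_subsystems}.
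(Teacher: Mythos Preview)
Your argument is correct and is exactly the soft deduction the paper has in mind: the paper does not spell out a proof, stating only that the corollary is an immediate consequence of Theorem~\ref{thm:classifying_minimal_subsystems}. The steps you give---that the two singletons are $G$-fixed, that $H,U\le G$ force one of the fixed points into any orbit closure via Theorem~\ref{thm:classifying_minimal_subsystems}, and that minimality then collapses $\Y$ to that singleton---are precisely the intended ones.
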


Note that in ergodic theory, a classification result for minimal
subsystems is a topological analogue of a classification
of invariant measures. Indeed the measure classification problem for
$(\X, G)$ has been raised by Marklof \cite[\S20]{Marklof_survey},
who has shown (in a series of works with Str\"ombergsson, see
\cite{Marklof_survey, MS} and references therein) that such a classification is of
great interest for a variety of problems in mathematical
physics. Besides the Dirac measures on the fixed points above, there
are additional measures provided by the Poisson point process, and
natural  probability measures on spaces of grids and cut-and-project
sets, embedded in $\X$.

We also provide an independent direct proof of Theorem
\ref{thm:No_Gowers-Danzer_set}. This second proof can be made quantitative,
see Proposition \ref{prop:quantitative}. We deduce the following
result about certain ``$\varepsilon$-nets'' in ``range spaces'' (see
\cite[\S14.4]{AS} and \cite[\S10]{Mato} for definitions and further reading about
these notions):  

\begin{thm}\label{thm:CG}
For every $\varepsilon>0,$ if $N_\varepsilon\subseteq [0,1]^d$
intersects every convex set of volume $\varepsilon$ in $[0,1]^d$, then
there exist a convex set $K\subseteq [0,1]^d$ of volume $\varepsilon$
with $\#(N_\varepsilon\cap K) = \Omega(\log\log(1/\varepsilon))$.
\end{thm}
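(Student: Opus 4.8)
The plan is to extract the statement from the quantitative version of the second (direct, non-dynamical) proof of Theorem \ref{thm:No_Gowers-Danzer_set}, which the excerpt promises in Proposition \ref{prop:quantitative}. I would first recall the rescaling that turns an $\varepsilon$-net into a Danzer-type set at unit scale: if $N_\varepsilon \subseteq [0,1]^d$ meets every convex set of volume $\varepsilon$ inside $[0,1]^d$, then applying the linear map $x \mapsto \varepsilon^{-1/d} x$ (or working with the dilates of the cube) produces a set that, within a box of side $\varepsilon^{-1/d}$, meets every convex set of volume $1$. So the local combinatorial content is exactly that of a ``Danzer set restricted to a large box of volume $1/\varepsilon$''. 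The core of the argument is then to run the construction behind Theorem \ref{thm:No_Gowers-Danzer_set} — which produces, for a genuine (infinite) Danzer set, ellipsoids $E_n$ of arbitrarily small volume containing at least $n$ points — but to track how large the ambient box must be to fit the construction for a given $n$.

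The key steps, in order: (1) State and use Proposition \ref{prop:quantitative}, whose content I expect to be: there is an absolute constant $c>0$ such that for any set $Y$ meeting every convex body of volume $1$ inside a box $B$ of volume $V$, there is an ellipsoid $E \subseteq B$ of volume $\le 1$ (in fact of volume comparable to $1$, or small) with $\#(E \cap Y) \ge c \log\log V$. (2) Reduce Theorem \ref{thm:CG} to this by the rescaling of the previous paragraph, with $V = 1/\varepsilon$, noting that an ellipsoid of volume $\le 1$ in the rescaled picture pulls back to a convex (in fact ellipsoidal) set of volume $\le \varepsilon$ in $[0,1]^d$; if one wants volume exactly $\varepsilon$ one simply enlarges it slightly, keeping it inside $[0,1]^d$ and only increasing the point count. (3) Conclude $\#(N_\varepsilon \cap K) \ge c\log\log(1/\varepsilon) = \Omega(\log\log(1/\varepsilon))$. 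Since the iterated-logarithm bound in the conclusion is exactly the kind of growth one gets from an iterated pigeonhole/nesting argument (at each stage the available ``room'' shrinks by taking logs), this matches the expected shape of the direct proof: one builds a decreasing nested sequence of convex sets, each of volume $\varepsilon$, where at stage $k$ the number of forced points roughly doubles while the ``scale budget'' is exponentiated, so $n$ points cost a box of volume doubly-exponential in $n$, i.e. $n \approx \log\log V$.

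The main obstacle — assuming Proposition \ref{prop:quantitative} is available in the quantitative form indicated — is purely bookkeeping: making sure that the convex sets produced by the finite-box version of the construction genuinely sit inside $[0,1]^d$ after rescaling, and that the slight inflation needed to normalize their volume to exactly $\varepsilon$ does not push them outside the cube nor drop any of the guaranteed points. One handles this by carrying out the construction well inside the box (say in $[\tfrac14,\tfrac34]^d$ at unit scale), which costs only a constant factor in $V$ and hence only an additive constant inside the $\log\log$, absorbed into the $\Omega(\cdot)$. A secondary point to check is that the direct construction, which for an infinite Danzer set uses the defining constant $s$ freely, can be arranged to use only a bounded region of $\R^d$ for each fixed target $n$; this is where one must inspect the proof of Theorem \ref{thm:No_Gowers-Danzer_set} and confirm that the ellipsoids $E_n$ it produces, together with all auxiliary convex sets invoked along the way, lie in a box whose size is an explicit (doubly-exponential) function of $n$. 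Granting that, Theorem \ref{thm:CG} follows immediately by the rescaling dictionary above.
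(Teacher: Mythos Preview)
Your proposal is correct and matches the paper's approach: rescale by $x\mapsto \varepsilon^{-1/d}x$ to turn the $\varepsilon$-net into a set meeting every unit-volume convex body inside a cube of side $\varepsilon^{-1/d}$, then invoke the quantitative Proposition~\ref{prop:quantitative} to find a unit-volume convex set with $\Omega(\log\log(1/\varepsilon))$ points, and pull back. The only cosmetic difference is that the paper states Proposition~\ref{prop:quantitative} in the forward direction (for a Danzer set with parameter $s$ and target count $n$, the constructed $K_n$ has $\mathrm{Vol}(K_n)=s$, $0\in K_n$, and $\diam(K_n)\le C_{d,s}\cdot 4^{d^n/(d-1)^{n-1}}$), and then repackages it as a finite-box statement (Corollary~\ref{cor:quantitative}) before rescaling; your inverted formulation ``box of volume $V$ forces $c\log\log V$ points'' is the same content, and the paper's control $0\in K_n$ together with the diameter bound is exactly what replaces your ``work in $[\tfrac14,\tfrac34]^d$'' trick to keep everything inside the cube.
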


Additional results and open questions are discussed in \S \ref{sec: questions}. 
For more results related to Danzer's questions and weaker
formulations, we refer the reader to the papers \cite{BW, SW, Bishop,
  SS, PT}.  

\medskip

{\bf Acknowledgements.}
We thank Nati Linial for stimulating remarks and suggestions. We
acknowledge the support of ERC starter grant DLGAPS 279893, and the
support of the Israel Science Foundation 2095/15.

\section{Preliminaries}\label{sec:Preliminaries}
For $x\in\R^d$ and $r>0$ we denote by $B(x,r)$ the ball of radius $r$
centered at $x$ with respect to the Euclidean norm $\norm{\cdot}$, by $B_r\df B(0,r)$, and by $\overline{A}$ the closure of a set $A\subseteq\R^d$. For a set $F\subseteq\R^d$ denote its $\vre$-neighborhood by
$U_\varepsilon(F)\df \bigcup\{B(x,\varepsilon):x\in F\}$. The notation $\#S$ denotes
the cardinality of a set $S$. 

We will need the following theorem (see \cite[Lecture 3]{Ball}, \cite{John}).
\begin{thm}[John's Theorem]\label{thm:John}
For every convex set $K\subseteq\R^d$ there exist ellipsoids
$E_1\subseteq K\subseteq E_2$ such that $\Vol(E_2)/\Vol(E_1)\le C_d$,
where $C_d$ is a constant that depends only on $d$. 
\end{thm}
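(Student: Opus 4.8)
The plan is to establish the classical \emph{John ellipsoid} bound, which gives the non-sharp value $C_d=d^d$; since only the existence of \emph{some} dimensional constant is needed, sharpness is irrelevant. First a reduction. A convex set of finite positive volume is automatically bounded with nonempty interior (an unbounded convex set having an interior point contains the convex hull of a ball and a ray, hence has infinite volume), so we may assume $K$ is compact with $\interior K\neq\varnothing$; if $\interior K=\varnothing$ then $\Vol(K)=0$ and the statement is vacuous, and a bounded non-closed $K$ may be replaced by its closure. Finally, an invertible affine map scales all volumes by a fixed factor and preserves all inclusions, so it suffices to prove the conclusion after applying any such map.

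\textbf{Step 1 (a maximal inscribed ellipsoid).} Every ellipsoid contained in $K$ has the form $\{Ax+v:\norm{x}\le 1\}$ for a unique symmetric positive-definite $A$ and some $v\in\R^d$, with volume $\Vol(B_1)\det A$. The set of pairs $(A,v)$ — with $A$ symmetric positive-semidefinite — for which this set lies in $K$ is closed, and bounded because $K$ is bounded; since $(A,v)\mapsto\det A$ is continuous it attains a maximum there, and the maximum is positive since $\interior K\neq\varnothing$. Thus there is a genuine ellipsoid $E_1\subseteq K$ of maximal volume. \textbf{Step 2 (normalization).} Applying the affine map carrying $E_1$ to the unit ball $B_1$, we are reduced to showing $K\subseteq d\,B_1$: then $E_1=B_1\subseteq K\subseteq d\,B_1=:E_2$ with $\Vol(E_2)/\Vol(E_1)=d^d=:C_d$.

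\textbf{Step 3 ($K\subseteq d\,B_1$, by contradiction).} Suppose some $p\in K$ has $\norm{p}=R>d$; after a rotation (preserving volumes, inclusions and $B_1$) assume $p=Re_1$, so $\conv(B_1\cup\{Re_1\})\subseteq K$ by convexity. This ``ice-cream cone'' and the ellipsoids
\[
E(\alpha,\beta,c)\df\Bigl\{\,y\in\R^d:\tfrac{(y_1-c)^2}{\alpha^2}+\tfrac{y_2^2+\cdots+y_d^2}{\beta^2}\le 1\,\Bigr\}
\]
are all rotationally symmetric about the $e_1$-axis, so $E(\alpha,\beta,c)\subseteq\conv(B_1\cup\{Re_1\})$ if and only if the planar ellipse $\{(y_1,s):(y_1-c)^2/\alpha^2+s^2/\beta^2\le 1\}$ lies in $\conv\bigl(\{x_1^2+x_2^2\le 1\}\cup\{(R,0)\}\bigr)$. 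Take $c=\alpha-1$ (left vertex at $(-1,0)$) and $\beta^2=\frac{R+1-2\alpha}{R-1}$, the value making the planar ellipse tangent to both lines through $(R,0)$ that touch the unit circle. Writing this containment as $cu+\sqrt{(\alpha^2-\beta^2)u^2+\beta^2}\le 1$ for every unit vector $(u,v)$ with $u\le 1/R$, the left-hand side is a convex function of $u$ (using $\alpha\ge\beta$, valid for $\alpha$ slightly exceeding $1$) equal to $1$ at $u=-1$ and at $u=1/R$, hence $\le 1$ in between; so $E(\alpha,\beta,c)\subseteq\conv(B_1\cup\{Re_1\})$ for such $\alpha$. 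Its volume is $\Vol(B_1)\,\alpha\beta^{d-1}$, which equals $\Vol(B_1)$ at $\alpha=\beta=1$, and since $\left.\frac{d}{d\alpha}\log(\alpha\beta^{d-1})\right|_{\alpha=1}=1-\frac{d-1}{R-1}=\frac{R-d}{R-1}>0$, increasing $\alpha$ slightly past $1$ produces an ellipsoid inside $K$ of volume strictly greater than $\Vol(E_1)$, contradicting Step 1. Hence $\norm{p}\le d$ for all $p\in K$, i.e. $K\subseteq d\,B_1$.

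The crux is the contradiction argument in Step 3: one must check that the exhibited rotationally symmetric ellipsoid genuinely lies in $\conv(B_1\cup\{Re_1\})$ — which the convexity (support-function) observation handles cleanly once one reduces to the planar picture — and then carry out the elementary one-variable volume comparison, from whose single sign computation the threshold $R=d$ falls out. Everything else (existence of $E_1$ by compactness, the affine normalization, and passing from $K\subseteq d\,B_1$ to the ratio bound $C_d=d^d$) is routine.
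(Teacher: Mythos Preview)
The paper does not prove this theorem at all: it is quoted as a classical result with references to \cite{Ball} and \cite{John}, and is used as a black box. So there is no ``paper's own proof'' to compare against; your write-up supplies exactly the standard John-ellipsoid argument (maximal inscribed ellipsoid by compactness, affine normalization to $B_1$, and the ice-cream-cone contradiction giving $K\subseteq d\,B_1$, hence $C_d=d^d$), which is precisely what one finds in the cited source \cite[Lecture 3]{Ball}.

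Your argument is correct. One small point worth making explicit in Step~3: when you reduce the containment $E(\alpha,\beta,c)\subseteq\conv(B_1\cup\{Re_1\})$ to the support-function inequality $cu+\sqrt{(\alpha^2-\beta^2)u^2+\beta^2}\le 1$ for $u\le 1/R$, you are silently using that the remaining directions $u>1/R$ are already handled. They are --- by your choice of $\beta$ the ellipse is tangent to each of the two lines $x\pm y\sqrt{R^2-1}=R$, and since the ellipse is convex with center $(\alpha-1,0)$ on the disk side of both lines, it lies in the closed wedge they bound; equivalently, $h_E(u,v)\le Ru$ for $u>1/R$. Adding a clause to this effect would make the containment check airtight. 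Everything else (the convexity-of-$f$ observation on $[-1,1/R]$ and the derivative computation $\tfrac{d}{d\alpha}\log(\alpha\beta^{d-1})\big|_{\alpha=1}=(R-d)/(R-1)$) is clean and correct.
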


Let $\X, G$ be as in the introduction, and for $F_1,F_2\in \X$ define
\eq{eq: as in intro}{
D(F_1,F_2)=\inf\left (\left\{
\varepsilon
 : 
\begin{matrix} 
F_1\cap B_{1/\varepsilon}\subseteq U_\varepsilon(F_2) \\ 
F_2\cap B_{1/\varepsilon}\subseteq U_\varepsilon(F_1) 
\end{matrix}\right\}  \cup \{1\} \right).
}

The following facts are standard, see e.g. \cite{ Gelander, delaharpe, Lenz}:

\begin{prop}
\begin{itemize}
\item
$D$ is a complete metric on $\X$.
\item
With this metric, $\X$ is homeomorphic to the space of nonempty compact
subsets of the one-point compactification $\R^d \cup \{\infty\}$,
equipped with the Hausdorff metric, via the map $F \mapsto F \cup
\{\infty\}$. In particular $\X$ is compact.  
\item
If $(F_n)$ is a convergent sequence in $\X$ then 
$$
\lim_{n \to \infty} F_n = \{x \in \R^d: \exists x_n \in F_n \text{ such that } x_n \to_{n
  \to \infty}  x\}.
$$
\end{itemize}
\end{prop}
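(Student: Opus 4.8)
The plan is to check directly that $D$ from \equ{eq: as in intro} is a metric, then to identify $(\X,D)$ with a hyperspace of compact sets over a compact space, from which compactness and hence completeness follow at once, and finally to read the description of limits straight off the definition of $D$.

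\textbf{Metric axioms.} Symmetry and the bound $D\le1$ are immediate from \equ{eq: as in intro}. The one useful preliminary is that, for fixed $F_1,F_2$, the set of $\varepsilon$ satisfying the two displayed inclusions is upward closed: if $\varepsilon$ works then so does any $\varepsilon'>\varepsilon$, since $B_{1/\varepsilon'}\subseteq B_{1/\varepsilon}$ while $U_\varepsilon(\cdot)\subseteq U_{\varepsilon'}(\cdot)$. Hence $D(F_1,F_2)=0$ forces both inclusions to hold for every $\varepsilon>0$; letting $\varepsilon\to0$ and using that the $F_i$ are closed gives $F_1\subseteq F_2\subseteq F_1$, and the converse is clear. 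For the triangle inequality I would assume $D(F_1,F_2)+D(F_2,F_3)<1$ (otherwise there is nothing to prove), pick values $\varepsilon_1>0$ working for $(F_1,F_2)$ and $\varepsilon_2>0$ for $(F_2,F_3)$ with $\varepsilon\df\varepsilon_1+\varepsilon_2<1$, and verify that $\varepsilon$ works for $(F_1,F_3)$: given $x\in F_1\cap B_{1/\varepsilon}$ choose $y\in F_2$ with $\|x-y\|<\varepsilon_1$; then $\|y\|<\tfrac1\varepsilon+\varepsilon_1\le\tfrac1{\varepsilon_2}$ (this inequality reduces to $\varepsilon\varepsilon_2\le1$, which holds since $\varepsilon,\varepsilon_2<1$), so $y\in F_2\cap B_{1/\varepsilon_2}$ and there is $z\in F_3$ with $\|y-z\|<\varepsilon_2$, whence $\|x-z\|<\varepsilon$; the symmetric inclusion is identical, after which one lets $\varepsilon_i\downarrow D(F_i,F_{i+1})$.

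\textbf{Identification with a hyperspace; compactness and completeness.} Set $Z\df\R^d\cup\{\infty\}$, fix a metric inducing its (compact, metrizable) topology, and let $d_H$ denote the associated Hausdorff metric on the space $\mathcal K(Z)$ of nonempty compact subsets of $Z$. I would first check that $\iota\colon\X\to\mathcal K(Z)$, $\iota(F)=F\cup\{\infty\}$, is a bijection onto $\{K\in\mathcal K(Z):\infty\in K\}$, with inverse $K\mapsto K\cap\R^d$ — using that $C\cup\{\infty\}$ is closed, hence compact, in $Z$ whenever $C$ is closed in $\R^d$, and that $K\cap\R^d$ is closed in $\R^d$ whenever $K$ is closed in $Z$. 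Next I would show that $\iota$ is a homeomorphism onto its image by comparing the two metrics: ``being near $\infty$'' in $Z$ means ``lying outside a large ball $B_R$'' in $\R^d$, so the truncation to $B_{1/\varepsilon}$ together with the $\varepsilon$-enlargement in \equ{eq: as in intro} translate, up to a fixed change of parameters depending on the chosen metric on $Z$, into the two one-sided Hausdorff inclusions for $\iota(F_1),\iota(F_2)$, the common point $\infty$ absorbing whatever lies far from the origin. Since $Z$ is a compact metric space, $(\mathcal K(Z),d_H)$ is compact (the hyperspace of a compact metric space is compact — a standard fact), and $\{K:\infty\in K\}$ is closed in it, so $\X$ is compact; being a compact metric space it is complete, which completes the first assertion. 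I expect the genuinely fiddly point to be precisely this comparison of metrics — making the ``$1/\varepsilon$ versus $\varepsilon$'' bookkeeping line up with a single scale near $\infty$ in $Z$.

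\textbf{Description of limits.} Finally, suppose $F_n\to F$ in $\X$ and set $L\df\{x\in\R^d:\exists\,x_n\in F_n,\ x_n\to x\}$. If $x\in F$, then for each $k$ there is $N_k$ with $D(F_n,F)<\varepsilon_k\df\min(1/k,\,1/(1+\|x\|))$ for all $n\ge N_k$; since $x\in F\cap B_{1/\varepsilon_k}\subseteq U_{\varepsilon_k}(F_n)$, some point of $F_n$ lies within $\varepsilon_k$ of $x$, and selecting such a point for each $n$ with $N_k\le n<N_{k+1}$ — and any point of $F_n$ for the finitely many $n<N_1$, which is legitimate since $F_n\ne\varnothing$ once $F\ne\varnothing$ — produces $x_n\in F_n$ with $x_n\to x$, so $x\in L$. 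Conversely, if $x_n\in F_n$ with $x_n\to x$ but $x\notin F$, set $\delta\df\operatorname{dist}(x,F)>0$ (read as $+\infty$ if $F=\varnothing$); fixing $\varepsilon<\min(\delta/2,\,1/(1+\|x\|))$, for all large $n$ we have $\|x_n-x\|<\delta/2$, $\|x_n\|<1/\varepsilon$ and $D(F_n,F)<\varepsilon$, hence $x_n\in F_n\cap B_{1/\varepsilon}\subseteq U_\varepsilon(F)$ and so $\operatorname{dist}(x,F)<\delta$ — impossible (and already impossible if $F=\varnothing$, since then $U_\varepsilon(F)=\varnothing$). Thus $L\subseteq F$, finishing the proof.
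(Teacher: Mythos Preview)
The paper does not prove this proposition: it is stated as standard with references to \cite{Gelander, delaharpe, Lenz}. Your write-up therefore goes well beyond what the paper supplies, and your strategy --- verify the metric axioms directly, identify $(\X,D)$ with the closed subspace $\{K:\infty\in K\}$ of the Hausdorff hyperspace of nonempty compact subsets of $\R^d\cup\{\infty\}$, deduce compactness (hence completeness), and then read off the limit description from the definition of $D$ --- is exactly the standard route those references follow. The triangle-inequality bookkeeping (in particular the check that $1/\varepsilon+\varepsilon_1\le 1/\varepsilon_2$ reduces to $\varepsilon\varepsilon_2\le1$) and the two inclusions for the limit set are carried out correctly.

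Two minor remarks. First, the metric comparison in the hyperspace step is only sketched; you flag this yourself, and it is the one place where genuine work remains (one fixes a concrete metric on $\R^d\cup\{\infty\}$, e.g.\ via stereographic projection, and produces explicit moduli of continuity for $\iota$ and $\iota^{-1}$). Second, in the $F\subseteq L$ direction only $F_n\ne\varnothing$ for \emph{large} $n$ follows from $F\ne\varnothing$, not for all $n$; for the finitely many possibly empty $F_n$ one simply leaves $x_n$ undefined and reads the convergence $x_n\to x$ as a tail statement, which is in any case how the proposition must be interpreted. Neither point affects the validity of the argument.
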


The topology induced by this metric is also called the
\emph{Chabauty-Fell topology}. It will be the only topology we
consider on $\X$ and thus in the sequel we will omit reference to the
metric $D$.

The standard affine action of $G$ on $\R^d$ induces a $G$-action on
$\X$. The following lemma connects the properties considered by Danzer
and Gowers, to the dynamical system $(\X,G)$.

\begin{lem}\label{Lemma:Danzer_dynamical_characteriztion}
For a set $Y\subseteq\R^d$ we have
\begin{itemize}
\item[(a)]
$Y$ is a Danzer set $\iff$ $\varnothing\notin \overline{G.Y}$. 
\item[(b)]
If there exist $\vre, C>0$ such that for every convex set $K$ of volume $\vre$
we have $\#(K\cap Y)\le C$, then $\R^d\notin \overline{G.Y}$. 
\end{itemize}
\end{lem}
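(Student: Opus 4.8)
The plan is to convert both statements into assertions about how the translates $g.Y$ ($g\in G$) sit inside large Euclidean balls, exploiting throughout that every element of $G$ preserves Lebesgue measure, and to move between balls, ellipsoids and arbitrary convex sets using John's theorem (Theorem~\ref{thm:John}).

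For part (a) I would first unwind the metric $D$: by the description of convergence in $\X$ recalled above, $\varnothing\in\overline{G.Y}$ means precisely that there are $g_n\in G$ with $g_n.Y\cap B_R=\varnothing$ for every fixed $R>0$ and all sufficiently large $n$. For the forward implication, suppose $Y$ is a Danzer set with constant $s$ and fix a ball $B_R$ with $\Vol(B_R)\ge s$; then each $g_n^{-1}B_R$ is an ellipsoid of volume $\ge s$, hence meets $Y$, i.e.\ $g_n.Y\cap B_R\ne\varnothing$ --- contradicting the emptiness above. For the converse, suppose $Y$ is not a Danzer set, so for each $n$ there is a convex set of volume $n$ disjoint from $Y$; by John's theorem it contains an ellipsoid $E_n$ with $\Vol(E_n)\ge n/C_d$ and $E_n\cap Y=\varnothing$. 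Writing $E_n=g_n.B_{\rho_n}$ with $g_n\in G$ and $\rho_n\to\infty$, we get $g_n^{-1}.Y\cap B_{\rho_n}=\varnothing$, hence $g_n^{-1}.Y\to\varnothing$ and $\varnothing\in\overline{G.Y}$.

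For part (b), the same bookkeeping shows that $\R^d\in\overline{G.Y}$ means there are $g_n\in G$ with $B_{1/\varepsilon}\subseteq U_\varepsilon(g_n.Y)$ for every $\varepsilon>0$ and all large $n$. I would fix $\rho>0$ with $\Vol(B_\rho)=\vre$ and then choose a single small $\varepsilon>0$, depending only on $d,C,\vre$, with $\rho-\varepsilon>0$ and $\big((\rho-\varepsilon)/\varepsilon\big)^d>C$. For $n$ large we then have $B_{\rho-\varepsilon}\subseteq B_{1/\varepsilon}\subseteq U_\varepsilon(g_n.Y)$, and any $y\in g_n.Y$ whose $\varepsilon$-ball meets $B_{\rho-\varepsilon}$ lies in $B_\rho$; hence the $\varepsilon$-balls about the points of $g_n.Y\cap B_\rho$ cover $B_{\rho-\varepsilon}$, forcing $\#(g_n.Y\cap B_\rho)\ge \Vol(B_{\rho-\varepsilon})/\Vol(B_\varepsilon)=\big((\rho-\varepsilon)/\varepsilon\big)^d>C$. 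Then $g_n^{-1}.B_\rho$ is a convex set of volume $\vre$ containing more than $C$ points of $Y$, contradicting the hypothesis, so $\R^d\notin\overline{G.Y}$.

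The only point that needs genuine care --- the main obstacle, such as it is --- is the normalization used twice above: that an ellipsoid of prescribed volume $v$ can be written as $g.B_\rho$ with $g\in G=\SL_d(\R)\ltimes\R^d$ orientation-preserving of determinant one, where $\rho=(v/\Vol(B_1))^{1/d}$. This uses that a round ball is $O(d)$-invariant, so after expressing the ellipsoid as an affine image of a ball one is free to multiply the linear part on the right by an orthogonal matrix to make its determinant positive, and the leftover positive scalar is then exactly absorbed into the radius $\rho$. The remaining ingredients --- the elementary covering estimate for $\varepsilon$-dense sets, the fact that a convex set of volume $<\vre$ sits inside one of volume $\vre$, and the translation of $D$-convergence into statements about balls --- are routine.
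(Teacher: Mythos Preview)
Your proof is correct and follows the same route as the paper's: part (a) uses that $G$ acts transitively on ellipsoids of a given volume (your ``normalization'' paragraph) together with John's theorem to translate the Danzer condition into the statement that every large ball meets every $g.Y$, and part (b) is exactly the density/covering argument the paper compresses into a single sentence. You have simply written out the details the paper leaves implicit.
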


\begin{proof}
To prove (a), note that $G$ acts transitively on the collections of
ellipsoids with the same volume in $\R^d$, so using Theorem
\ref{thm:John} we have the following: 
\[
\begin{split} 
Y \text{ is not Danzer } & \iff \text{for any } T \text{ there is
  an ellipsoid of Vol} \geq T \text{ disjoint from } Y 
\\
&\iff \forall r >0\:\:\exists g\in G \text{ such that } g^{-1}.B_{r}\cap Y=\varnothing \\
&\iff \forall r>0\:\:\exists g\in G \text{ such that } B_{r} \cap g.Y=\varnothing \\
&\iff \varnothing\in \overline{G.Y}.
\end{split}
\]
Statement (b) follows from the fact that for sufficiently small $\delta$
(depending on $\vre$ and $C$), a set consisting of at most $C$ points cannot be
$\delta$-dense in the ball of volume $\vre$. 
\end{proof}

For $s>0$, we say that $Y\subseteq\R^d$ is {\em Danzer with volume
  parameter $s$} if for any 
closed  convex set $K$ of volume $s$ we have $Y \cap K\neq \varnothing.$
\begin{lem}\label{Lemma:Same_s_parameter}
The set of Danzer sets with volume parameter $s$ is closed in $\X$. 
\end{lem}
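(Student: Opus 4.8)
The plan is to use that $\X$ is metrizable, so it suffices to show that if $Y_n \to Y$ in $\X$ with every $Y_n$ Danzer with volume parameter $s$, then $Y$ is Danzer with volume parameter $s$ too. I would first record one preliminary point: every closed convex set $K \subseteq \R^d$ with $\Vol(K) = s > 0$ is compact. Indeed, having positive Lebesgue measure, $K$ is not contained in a hyperplane and hence has nonempty interior; and an unbounded closed convex set with nonempty interior contains a ray, so its convex hull together with an interior ball contains a half-infinite solid cylinder and thus has infinite volume. So $\Vol(K)<\infty$ forces $K$ bounded, hence compact. This is what makes the convex bodies in question visible to the metric $D$, which only compares sets within large balls.

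Then I would argue by contradiction: suppose $Y$ is not Danzer with volume parameter $s$, and fix a closed convex $K$ with $\Vol(K) = s$ and $Y \cap K = \varnothing$. By the above $K$ is compact, and since $Y$ is closed and disjoint from $K$, $\delta \df \inf\{\|y-x\| : y \in Y,\ x \in K\} > 0$ (read $\delta = +\infty$ if $Y = \varnothing$). Choose $R$ with $K \subseteq B_R$ and then $\varepsilon$ with $0 < \varepsilon < \min(\delta, 1/R, 1)$. Since $D(Y_n, Y) \to 0$, for all large $n$ the value $\varepsilon$ belongs to the set defining $D(Y_n,Y)$ in \eqref{eq: as in intro}, so in particular $Y_n \cap B_{1/\varepsilon} \subseteq U_\varepsilon(Y)$. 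But $K \subseteq B_R \subseteq B_{1/\varepsilon}$, while no point of $K$ is within $\varepsilon < \delta$ of $Y$, so $K \cap U_\varepsilon(Y) = \varnothing$; hence $Y_n \cap K = \varnothing$ for all large $n$, contradicting that $Y_n$ is Danzer with volume parameter $s$ (recall $K$ is a closed convex set of volume $s$).

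I expect the only slightly delicate point to be the passage from $D(Y_n, Y) < \varepsilon$ to the one-sided inclusion $Y_n \cap B_{1/\varepsilon} \subseteq U_\varepsilon(Y)$ for the chosen $\varepsilon$. This holds because the set of witnesses $\varepsilon'$ in \eqref{eq: as in intro} is upward closed: if $\varepsilon'$ satisfies both inclusions there, so does any larger $\varepsilon'' \le 1$, since $B_{1/\varepsilon''} \subseteq B_{1/\varepsilon'}$ and $U_{\varepsilon'}(\cdot) \subseteq U_{\varepsilon''}(\cdot)$. Hence once the infimum drops below $\varepsilon$ (with $\varepsilon < 1$), $\varepsilon$ itself is a witness. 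Everything else is a routine compactness argument, using neither the dynamics of $(\X,G)$ nor John's theorem.
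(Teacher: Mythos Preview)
Your proof is correct and takes essentially the same approach as the paper's: both argue by contradiction, using the compactness of the closed convex set of volume $s$ together with Chabauty--Fell convergence. The paper's version is a bit shorter---it picks points $p_n \in A \cap Y_n$, extracts a subsequential limit $p \in A$, and invokes the limit characterization $\lim_n F_n = \{x : \exists x_n \in F_n,\ x_n \to x\}$ to conclude $p \in Y$---whereas you work directly with the metric $D$ to show $Y_n \cap K = \varnothing$ for large $n$; you also spell out why $K$ is compact, which the paper simply asserts.
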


\begin{proof}
Let $(Y_n)$ be a sequence of Danzer sets with volume parameter $s$ in $\R^d$
such that $Y_n\xrightarrow{n\to\infty} Y$,
and assume by way of contradiction that $Y$ misses a closed convex set $A$ of volume
$s$. 
Since each $Y_n$ is a Danzer set with
parameter $s$, let $p_n\in A\cap Y_n$. Since $A$ is compact, the
sequence $(p_n)$ has a subsequence 
$(p_{n_i})$ that converges to a point $p\in A$. This implies that $p\in
Y=\lim_{i\to\infty}Y_{n_i}$, a contradiction.  
\end{proof}

Motivated by Lemmas \ref{Lemma:Danzer_dynamical_characteriztion} and
\ref{Lemma:Same_s_parameter} we make the following definitions. For a 
subgroup $G_0 \subset G$, and $Y \in \X$, we say that $Y$ is
{\em Danzer for $G_0$} if $\varnothing \notin \overline{G_0.Y}$. Also for $r>0$, we say that $Y$
is {\em Danzer for $G_0$ with parameter $r$} if for 
any $g_0 \in G_0,
Y \cap g_0 \overline{B_{r}} \neq \varnothing.$ The arguments used in
the proofs of Lemmas \ref{Lemma:Danzer_dynamical_characteriztion} and \ref{Lemma:Same_s_parameter} show:
\combarak{To be safe, there is a proof in the latex file which is
  commented out.} 

\begin{prop}\name{prop: summary}
Let $G_0 ,r $ be as above. Then:
\begin{itemize}
\item[(i)]
$Y$ is Danzer for
$G_0$ if and only if there is $r>0$ such that $Y$  is Danzer for $G_0$ with
parameter $r$. 
\item[(ii)]
The collection of $Y \in \X$ which are Danzer for $G_0$ is 
$G_0$-invariant. 
\item[(iii)]
The collection of $Y \in \X$ which are Danzer for $G_0$ with parameter
$r$ is closed and $G_0$-invariant. 
\end{itemize}
\end{prop}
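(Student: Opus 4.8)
The plan is to adapt, almost verbatim, the arguments used to prove Lemmas~\ref{Lemma:Danzer_dynamical_characteriztion} and~\ref{Lemma:Same_s_parameter}, replacing the full group $G$ by the subgroup $G_0$ and using two consequences of $G_0$ being a group: the map $g\mapsto g^{-1}$ is a bijection of $G_0$, and $G_0 g = G_0$ for every $g\in G_0$. The recurring analytic input is the compactness of $\X$ together with the description of Chabauty--Fell limits from the Proposition above, which I will use in the form: if $F_n\to F$ in $\X$, if $K\subseteq\R^d$ is compact, and if $p_n\in F_n\cap K$ for all $n$, then some subsequence of $(p_n)$ converges to a point of $F\cap K$; and conversely, every point of $F$ is a limit of points $p_n\in F_n$. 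In particular $F_n\to\varnothing$ forces $F_n\cap K=\varnothing$ for all large $n$ and every fixed compact $K$.

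For (i), I first note that, since $g\mapsto g^{-1}$ permutes $G_0$, the statement ``$Y$ is Danzer for $G_0$ with parameter $r$'' is equivalent to ``$g.Y\cap\overline{B_r}\neq\varnothing$ for every $g\in G_0$''. For the implication $(\Leftarrow)$: if $\varnothing\in\overline{G_0.Y}$, pick $g_n\in G_0$ with $g_n.Y\to\varnothing$; then $g_n.Y\cap\overline{B_r}=\varnothing$ for all large $n$, contradicting the parameter-$r$ hypothesis. For $(\Rightarrow)$ I argue by contrapositive: if no $r$ works, then for each $n\in\N$ there is $g_n\in G_0$ with $Y\cap g_n\overline{B_n}=\varnothing$, i.e.\ $g_n^{-1}.Y\cap\overline{B_n}=\varnothing$; by compactness of $\X$, pass to a subsequence along which $g_{n_k}^{-1}.Y\to F$, and observe $F=\varnothing$, since any point of $F$ would be a limit of points $p_k\in g_{n_k}^{-1}.Y$ with $\|p_k\|>n_k\to\infty$, which is impossible. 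Hence $\varnothing\in\overline{G_0.Y}$, so $Y$ is not Danzer for $G_0$.

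Parts (ii) and (iii) are then bookkeeping with the group law. For (ii), if $g\in G_0$ then $G_0.(g.Y)=(G_0g).Y=G_0.Y$ as subsets of $\X$, so the two closures coincide and contain $\varnothing$, or fail to, simultaneously. For the $G_0$-invariance in (iii), fix $h\in G_0$: the requirement that $(h.Y)\cap g\overline{B_r}\neq\varnothing$ for all $g\in G_0$ rewrites as $Y\cap(h^{-1}g)\overline{B_r}\neq\varnothing$ for all $g\in G_0$, and $h^{-1}g$ ranges over all of $G_0$. For closedness in (iii), I repeat the proof of Lemma~\ref{Lemma:Same_s_parameter}: given $Y_n\to Y$ with each $Y_n$ Danzer for $G_0$ with parameter $r$, and given $g\in G_0$, choose $p_n\in Y_n\cap g\overline{B_r}$; since $g\overline{B_r}$ is compact, a subsequence $p_{n_k}$ converges to some $p\in g\overline{B_r}$, and $p\in\lim Y_{n_k}=Y$, so $Y\cap g\overline{B_r}\neq\varnothing$. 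I do not expect a genuine obstacle; the only step requiring a little care is the $(\Rightarrow)$ direction of (i), where one must invoke compactness of $\X$ to extract a convergent subsequence of $(g_n^{-1}.Y)$ and then verify, via the Chabauty--Fell limit formula, that its limit is really $\varnothing$ — exactly the maneuver underlying Lemma~\ref{Lemma:Danzer_dynamical_characteriztion}(a).
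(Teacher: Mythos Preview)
Your proposal is correct and follows exactly the approach the paper indicates: the paper does not give a separate proof of this proposition but simply states that ``the arguments used in the proofs of Lemmas~\ref{Lemma:Danzer_dynamical_characteriztion} and~\ref{Lemma:Same_s_parameter} show'' it, and you have carried out precisely that adaptation, replacing $G$ by $G_0$ and using the group axioms and compactness of $\X$ in the expected way.
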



\section{Proofs of the main theorems}\label{sec:main_proofs}
We introduce the following notation. We denote by 
$\xx=(x_1,\ldots,x_d)$ a vector in $\R^d$ and its coordinates, and by
$\ee_1,\ldots,\ee_d\in\R^d$ the standard basis vectors. For $1\le k\le
d$ we write 
$$V_k \df \spa \left(
\ee_i : 2\le
i\le k \right);$$ 
in particular $V_1=\{0\}$. We let $P_k: \R^d \to V_k$ be the
orthogonal projection onto $V_k$. The 
\emph{$x_i$-axis} refers to the set $\spa (\ee_i)$, and given $\yy\in
V_{d}$ by a \emph{horizontal line 
  through $\yy$} we mean the affine line parallel to the $x_1$-axis
through $\yy$, that is, the set
$\{\xx\in\R^d:(x_2,\ldots,x_d)=\yy\}$. 
Finally let $U_0, H_0, H$ be the subgroups of $\SL_d(\R)$
defined in the introduction.

\subsection{A dynamical proof}
For the proof of Theorem \ref{thm:classifying_minimal_subsystems} we first prove the following proposition.
\begin{prop}\label{prop:constructing_line}
For every $S\in\X$, if $S$ is Danzer for $H$ then there
exists $Y\in\overline{U_0.S}$ such that $Y$ contains the $x_1$-axis.  
\end{prop}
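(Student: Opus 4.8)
The goal is to produce, in the closure of the $U_0$-orbit of $S$, a closed set containing the entire $x_1$-axis. Since $S$ is Danzer for $H$, by Proposition~\ref{prop: summary}(i) there is a parameter $r>0$ such that $S$ meets $h\overline{B_r}$ for every $h\in H$; moreover, being Danzer for $H$ (with parameter $r$) is preserved under $H_0$, hence under $U_0$ and the flow $g_t$, and the property with parameter $r$ is closed (Proposition~\ref{prop: summary}(iii)). The idea is to use the flow $g_t$ (with $t\to+\infty$) to ``stretch'' the $x_1$-direction: under $g_t$, the $x_1$-axis is dilated by $e^{(d-1)t}$ while the transverse directions $V_d$ are contracted by $e^{-t}$. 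So if $S$ meets a small ball near the origin (which it does, being Danzer for $H$), then $g_t.S$ meets a long thin tube around the $x_1$-axis. The difficulty is that a single point, however stretched, is still a single point; we must instead harvest \emph{many} points spread along the $x_1$-axis and pass to a limit.

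\textbf{Step 1: many points along a long horizontal segment.} Fix a large $L$. For each $j$ in a suitable finite index set, consider the translate $u(\mathbf a_j)g_{t}$ or better a translation by a vector along $\mathbf e_1$: using that $S$ is Danzer for $H$ with parameter $r$, for every translation $v\in\R^d$ we have $S\cap (g_t v \overline{B_r})\neq\varnothing$. Choosing $v = k e^{-(d-1)t}\mathbf e_1$ for $k=0,1,\dots,\lceil L e^{(d-1)t}\rceil$ and applying $g_t$, we get points of $g_t.S$ within Euclidean distance $e^{-t}r$ (in the $V_d$ directions) and $e^{(d-1)t}r$ (in the $x_1$ direction, but we can instead use small balls) of each lattice point $k\mathbf e_1$ along the segment $[0,L]\mathbf e_1$. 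More carefully: apply $g_t$ to the statement ``$S$ meets $\overline{B_r}(w)$'' for a grid of centers $w$ spaced $\le r$ apart along $[0,L e^{-(d-1)t}]\mathbf e_1$; after applying $g_t$ these centers become a grid spaced $\le r e^{(d-1)t}$... this is the wrong direction. Instead, translate \emph{before} contracting: the clean way is to observe that $g_t.S$ is Danzer for $H$ with parameter $r$ as well (by $H_0$-invariance), so $g_t.S$ itself meets every translate of $\overline{B_r}$ by a vector in $\R^d$; in particular, for a fixed large $L$ and fine grid along $[0,L]\mathbf e_1$ with spacing $r$, the set $g_t.S$ has a point within $r$ of each grid point — but this holds for $S$ already and uses nothing about $t$. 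The role of $t\to+\infty$ is different: it is to \emph{flatten} $g_t.S$ toward the $x_1$-axis in the transverse directions, using the contraction $e^{-t}$ on $V_d$, combined with $U_0$ to adjust shears. The precise mechanism: pick for each $n$ a point $p_n\in S$ with $p_n\in \overline{B_r}$ (exists since $S$ is Danzer for $H$); then $P_d(g_{t_n}.p_n)\to 0$, so $g_{t_n}.p_n$ accumulates on the $x_1$-axis. Doing this along a whole family of base points at different heights won't directly give a line either.

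\textbf{Step 2 (the real mechanism): a compactness/averaging argument producing a horizontal line.} Here is the approach I would actually carry out. Consider the sets $F_t \df g_t.S$ for $t\ge 0$. Each is Danzer for $H$ with parameter $r$. Let $F$ be a subsequential limit (in $\X$, which is compact) of $F_{t_n}$ as $t_n\to\infty$; then $F$ is still Danzer for $H$ with parameter $r$ by Proposition~\ref{prop: summary}(iii), and moreover $F$ is $g_s$-invariant for all $s\in\R$: indeed $g_s.F_{t_n} = F_{t_n+s}$ and $F_{t_n+s}\to F$ as well, so $g_s.F = F$. Now a $g_s$-invariant closed set which meets $\overline{B_r}$ (it does, by Danzer) must, by applying $g_s$ with $s\to -\infty$ to a point $p\in F\cap\overline{B_r}$, contain all limit points of $g_s.p$; since $g_{-s}$ for $s\to\infty$ expands $V_d$ and contracts $x_1$, we get... the origin if $p$ has nonzero $V_d$-component we escape, so the relevant limits sit on the $x_1$-axis only when we run $s\to+\infty$ on a point and get $P_d\to 0$. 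Combining: for $p\in F\cap\overline{B_r}$, the points $g_s.p$ for $s\ge 0$ satisfy $P_d(g_s.p)\to 0$ and lie in $F$ (invariance), so their closure, a subset of $F$, is a subset of the $x_1$-axis; running $s$ over all of $\R$ and using that $g_s.p$ has $x_1$-coordinate $e^{(d-1)s}(p)_1$ (if $(p)_1\neq 0$, this sweeps out a half-line or the point $0$). This produces at least a ray on the $x_1$-axis inside $F$, hence inside $\overline{H.S}$; to upgrade a ray to the full axis and to land in $\overline{U_0.S}$ rather than $\overline{H.S}$, I would combine this with $U_0$: since $U_0$ together with $\{g_t\}$ generates $H_0$, and since $U_0$ fixes the $x_1$-axis pointwise while acting transitively enough on the transverse data, a further limit along $U_0$ translates (using the non-compactness of $U_0$ to push the ``missing'' half of the axis into view) yields $Y\in\overline{U_0.S}$ containing the whole $x_1$-axis.

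\textbf{Main obstacle.} The crux — and the step I expect to be hardest and to require the most care — is arranging that the final set lies in $\overline{U_0.S}$ rather than merely in $\overline{H.S}$, since a priori the flow $g_t$ takes us outside any $U_0$-orbit closure. The resolution should exploit that $g_t$ and $U_0$ do not commute and that conjugating $U_0$ by $g_t$ rescales the shear parameters: $g_t u(\mathbf a) g_{-t} = u(e^{dt}\mathbf a)$ (up to the exact exponent), so a bounded piece of $g_t.S$ near the $x_1$-axis can be reproduced, in the limit, by $U_0$ alone acting on $S$ with large shear parameters — that is, one should run the argument so that the composition $g_{t}$ ``used'' is absorbed into a $U_0$-element in the limit. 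Making this absorption rigorous, i.e. showing that the horizontal-line-containing limit is genuinely approximable by $u(\mathbf a_n).S$ for suitable $\mathbf a_n$, is the technical heart of the proof; everything else (the Danzer-with-parameter bookkeeping, the compactness of $\X$, the contraction estimates $P_d(g_t\cdot)\to 0$) is routine.
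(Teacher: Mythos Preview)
Your proposal has a genuine gap, and the mechanism you settle on in Step 2 does not work.

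First, the technical error: if $F$ is a subsequential limit of $F_{t_n}=g_{t_n}.S$, it does \emph{not} follow that $F$ is $g_s$-invariant. From $F_{t_n}\to F$ and continuity you get $F_{t_n+s}\to g_s.F$, not $F_{t_n+s}\to F$; a single accumulation point of a flow orbit need not be a fixed point of the flow. Even granting invariance, the orbit $\{g_s.p:s\in\R\}$ of a point $p$ with $P_d(p)\neq 0$ has no finite accumulation point on the $x_1$-axis (as $s\to+\infty$ the $x_1$-coordinate blows up while $P_d\to 0$; as $s\to-\infty$ the transverse part blows up). So you do not extract any axis points this way, let alone a ray.

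More importantly, you have correctly located the real difficulty---staying inside $\overline{U_0.S}$---but your proposed resolution (absorbing $g_t$ into $U_0$ via the conjugation relation) is not carried out, and the paper's proof shows that no such absorption is needed because \emph{one never applies $g_t$ to $S$ at all}. The role of $g_t$ is purely static: the Danzer-for-$H$ property says $S$ meets every $H$-translate of $\overline{B_r}$, in particular every $\R^d$-translate of the thin ellipsoid $g_t\overline{B_r}$. Choosing such a translate lying in a thin slab $\{0<\|P_d(\xx)\|<\eta\}$, one gets a point $\pp\in S$ with $0<\|P_d(\pp)\|<\eta$. Now the key observation you hint at but never exploit: $u(\mathbf a).\xx=(x_1+\sum a_ix_i,x_2,\dots,x_d)$, so $U_0$ fixes the $x_1$-axis pointwise, preserves $P_d$, and---since $P_d(\pp)\neq 0$---can send $\pp$ to a point with \emph{any prescribed} $x_1$-coordinate. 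Letting $\eta\to 0$ and passing to limits in $\X$ one places the origin into some $Y_0\in\overline{U_0.S}$. Since $U_0$ fixes axis points, the same shear-and-limit step applied to $Y_0$ (which is still Danzer for $H$ with the same parameter, by Proposition~\ref{prop: summary}(iii)) adds another prescribed axis point without disturbing the old ones. Inductively one builds $Y_{\varepsilon,N}\in\overline{U_0.S}$ containing $\{(n\varepsilon,0,\dots,0):|n|\le N\}$, and a final limit in $\varepsilon$ and $N$ gives the axis. All dynamics used is $U_0$; the flow $g_t$ only furnishes the thin ellipsoids witnessing the Danzer property.
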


\begin{proof}
By Proposition \ref{prop: summary}(i), there is $r>0$ such that $S$
is Danzer for $H$ with parameter $r$, and by Proposition
\ref{prop: summary}(iii), the same is true for any $Y$ in
$\overline{H.S}$. By definition, for any $g \in H$, $S \cap g\overline{B_{r}} \neq \varnothing.$ In
particular, since $U_0 \subset H$,   
every $Y\in\overline{U_0.S}$ intersects every translate of $g_t
\overline{B_{r}}$, where $g_t$ is as in \equ{eq: defn gt}.

It suffices to show that for every $N\in\N$ and $\varepsilon>0$ there
exists a set $Y_{\varepsilon,N}\in\overline{U_0.S}$ such that
$$\{(n\varepsilon,0,\ldots,0):n\in\Z, \absolute{n}\le N\}\subseteq
Y_{\varepsilon,N}.$$
We fix $\varepsilon$ and use induction on $N$. 
For $N=0$, we need to find an element of $\overline{U_0.S} \subset \X$ which
contains the origin. For $t>0$, $g_t\overline{B_{r}}$ is a closed ellipsoid
which is long in the $x_1$ direction and small in all the other
coordinate directions. Therefore, given $\eta>0$ we can choose $t>0$ large
enough so that the image of $g_t \overline{B_{r}}$ under the projection
$P=P_{d}: \R^d \to V_{d}$  is contained in a ball of diameter less than $\eta$. There
is therefore a translate $B'$ of $g_t\overline{B_{r}}$ such that for any $\xx \in B',$
\eq{eq: holds}{
0 < \|P(\xx)\| < \eta.
} 
Since $S$ intersects every translate of $g_t \overline{B_{r}}$, there is a
point $\pp=\pp_{0,\eta}\in S \cap B'$. The group $U_0$ in \equ{eq:
  def U} acts on $\R^d$ as follows:
$$
u(\mathbf{a}). \xx = \left(x_1 + \sum_{i=2}^d a_i x_i, x_2, \ldots, x_d \right).
$$
In particular it shears along
horizontal lines, keeps the $x_1$-axis fixed, satisfies $P(u.\xx) =
P(\xx)$, and if $P(\xx) \neq
0$ then the $x_1$-coordinate of $u(\mathbf{a})\xx$ can be made
arbitrary by using suitable $\mathbf{a}$. Thus we
can find $ u_\eta \in U_0$ such that the $x_1$-coordinate of the point
$u_\eta.\pp$ is $0$, and such that $\|P(u_\eta.\pp)\| < \eta$. 
Since $\eta$ was
arbitrary, and $\X$ is compact, passing to a subsequence and taking a convergent
subsequence we obtain a set $Y_{\varepsilon,0}\in\overline{U_0.S}$ that contains
the origin. 

The induction step is similar. Let
$Y_{\varepsilon,N}\in\overline{U_0.S}$ be the set that is obtained from
the induction hypothesis. For an arbitrary $\eta>0$ choose $t$ so that
the diameter of $P(g_t\overline{B_{r}})$ is less than $\eta$. Let
$B'$ be a translate of $g_t \overline{B_{r}} $ so that \equ{eq: holds}
holds for any element of $B'$.
Then $Y_{\varepsilon, N} \cap
B' \neq \varnothing.$
Let $\pp \in Y_{\varepsilon, N} \cap B' $, then there is some $u_\eta\in U_0$
such that the $x_1$-coordinate of the point
$u_\eta.\pp$ is equal to $(N+1)\varepsilon$, and
$\|P(u_\eta. \pp)\| < \eta$. Letting $\eta \to 0$ and taking
subsequences we find
$Y'_{\varepsilon,N+1}\in\overline{U_0.Y_{\varepsilon, N}}\subseteq
\overline{U_0.S}$ that 
contains the set
$$\{(n\varepsilon,0): n = -N, -(N-1),\ldots, N, N+1\}.$$
Using the set
$Y'_{\varepsilon,N+1}$ instead of $Y_{\varepsilon, N}$ and a similar argument we obtain the
required set $Y_{\varepsilon,N+1}$. This completes the proof.
\end{proof}

\begin{proof}[Proof of Theorem \ref{thm:classifying_minimal_subsystems}]
We set $W_k\df \spa\left (\ee_1,\ldots,\ee_k\right )$, and prove the following
claim by induction on $k$: for every $F\in\X$ which is Danzer for $H$,
there exists a set
$Z_k\in\overline{U_0\ltimes V_k.F}$ that contains $W_k$.  Note that
here we have identified the subspaces $V_k$ with
subgroups of the group of translations $\R^d$.

Proposition \ref{prop:constructing_line} proves the case
$k=1$. Suppose the statement is valid for $k \geq 1$, and we prove its
validity for $k+1$. Let $Z_k^{(0)}=Z_k\in\overline{U_0\ltimes V_k.F}$ be the set
obtained from the induction hypothesis, and let $\varepsilon>0$. Let
$S^{(1)}\df Z_k^{(0)}+\varepsilon \ee_{k+1}$, the set obtained by
translating $Z_k^{(0)}$ by $\varepsilon \ee_{k+1}$. By Proposition
\ref{prop: summary}, $S^{(1)}$ is also Danzer for $H$. 
By the induction
hypothesis there is a set 
$$Z_k^{(1)}\in\overline{U_0\ltimes
  V_k.S^{(1)}}
\subset \overline{U_0 \ltimes V_{k+1} .F}$$
that contains $W_k$. Note that all the subspaces $W_j$
and their translates are $U_0$-invariant, the subspace $W_k$ and its
translates are $V_k$-invariant, and the action of $V_k$ does not
change the $x_j$-coordinates for $j>k$. Therefore $Z_k^{(1)}$ and any
element in its orbit-closure under $U_0 \ltimes V_{k+1}$, 
contains both $W_k$ and its translate by $\varepsilon \ee_{k+1}$. By
repeating the above argument for every $\ell\in\N$ we obtain sets
$Z_k^{(\ell)}\in\overline{U_0\ltimes V_{k+1}.F}$ such that
$Z_k^{(\ell)}$ contains all the $k$-dimensional hyperplanes
$W_k+i\varepsilon \ee_{k+1}$, for $0\le i\le\ell$. Now set
$Z_{k+1,\varepsilon}\in\overline{U_0\ltimes V_{k+1}.F}$ to be a limit
point of the sequence $\left(Z_k^{(2n)}-n\varepsilon
\ee_{k+1} \right)$, then $Z_{k+1,\varepsilon}$ contains a
collection of $k$-dimensional hyperplanes that is $\varepsilon$-dense
in $W_{k+1}$. Taking $\varepsilon\to 0$ we obtain the required set
$Z_{k+1}\in\overline{U\ltimes V_k.F}$. 
\end{proof}

\begin{proof}[Proof I of Theorem \ref{thm:No_Gowers-Danzer_set}]
Since $Y$ is Danzer, by Lemma
\ref{Lemma:Danzer_dynamical_characteriztion}(a) we have $\varnothing
\notin \overline{G.Y}$ and in particular $\varnothing \notin
\overline{U.Y}. $ By Theorem \ref{thm:classifying_minimal_subsystems}, $\R^d \in
\overline{H.Y}$, so 
by Lemma \ref{Lemma:Danzer_dynamical_characteriztion}(b), for all
positive $\vre, C$ there is a convex subset of $\R^d$ of volume $\vre$ containing more
than $C$ points of $Y$, and the statement follows via John's theorem. 
\end{proof}

\subsection{A direct proof} 
Recall our notation that $B_r$ is the Euclidean ball of radius $r$
centered at the origin. A {\em closed centered ellipsoid} is the image of the closed
unit ball $\overline{B_1}$ under a nonsingular linear map $\R^d \to
\R^d$. 
Let $\beta_d$ be
the volume of a ball of radius $1$ in $\R^d$. For $r>0$ and a point
$\xx\notin B_r$ we define a closed centered ellipsoid $E(r,\xx)$
containing $B_r \cup \{\xx\}$, as follows. If $\xx = t\ee_1$ for
$t>r$, then $E(r, \xx)$ is the image of $\overline{B_1}$ under the
linear transformation whose matrix is $\diag  (t/r, 1, \ldots,
1)$. For general $\xx$, let $\Theta$ be an orthogonal linear
transformation with $\Theta (\ee_1) = \xx/\|\xx\|$, and let $E(
r, \xx) = \Theta (E(r, \|\xx\|\ee_1)).$ Clearly 
$$\Vol(E(r, \xx)) =
\beta_d r^{d-1} \|\xx\|.$$ 

\begin{proof}[Proof II of Theorem \ref{thm:No_Gowers-Danzer_set}]
Let $Y\subseteq\R^d$ be a Danzer set with volume parameter $s$, and by
rescaling we may assume that $s$ is the volume of the ball of diameter
$1/2$. Assume with no loss of generality that 
\eq{eq: assume wnlg}{
\vre < \vre_0,
\ \text{ where } \vre_0 = \frac{\beta_d}{2^{d-1}}. 
}

The proof is by induction on $n$. 
The $n=1$ case is true since $Y\neq\varnothing$, so we assume the
validity of the statement for $n$, and prove it for $n+1$. Let $E_n\subseteq\R^d$ be
an ellipsoid with $\#(E_n\cap Y)\ge n$ and  
\begin{equation}\label{eq:No_Gowers-Danzer_set-Vol(K_n)}
\Vol(E_n)<\vre' = \beta_d^{-1/(d-1)}\, \vre^{d/(d-1)} =
\left(\frac{\varepsilon}{\beta_d}\right)^{d/d-1} \beta_d. 
\end{equation}
The group $G$ acts transitively on closed ellipsoids of the
same volume, so let $g\in G$ be such that $g.E_n$ is a ball centered at the origin,
and denote its radius by $r$. Then $\#(g.E_n\cap g.Y) \ge n$ and the choice
of $\vre'$ guarantees that
$$
r^d\beta_d=\Vol(g.E_n)=\Vol(E_n)\stackrel{(\ref{eq:No_Gowers-Danzer_set-Vol(K_n)})}
<\left(\frac{\varepsilon}{\beta_d}\right)^{d/d-1}\beta_d,     
$$
and hence $r^{d-1}\beta_d<\varepsilon$. From \equ{eq: assume wnlg} we
find that $r<1/2$. Let $D \subseteq B_1$ be a ball
of diameter $1/2$ that is disjoint from $g.E_n$. Since $g.Y$ is also a 
Danzer set with parameter $s$, $D$ contains a point $\pp\in g.Y$. Set
$E_{n+1}'\df E(r,\pp)$, and $E_{n+1}\df
g^{-1}.E_{n+1}'$. Since $\norm{\pp}<1$, we have that 
$\Vol(E_{n+1}') \leq r^{d-1} \beta_d$. Thus 
\[\Vol(E_{n+1})=\Vol(E_{n+1}')  \le r^{d-1}\beta_d
< \varepsilon,\]  
and $$\#(E_{n+1}\cap Y) = \#(E'_{N+1} \cap g.Y) \ge n+1.$$
\end{proof}

\section{A finitary version and a quantitative result}\label{sec:Finitary_version}
Consider the following finitary version of
Gowers' question. 
\begin{ques}
Is there a constant $C>0$ such that for every $\varepsilon>0$ there
exists a set $N_\varepsilon\subseteq [0,1]^d$ such that for every
convex set $K\subseteq [0,1]^d$ of volume $\varepsilon$ we have $1\le
\#(N_\varepsilon\cap K)\le C$? 
\end{ques}

In this section we prove 
Theorem \ref{thm:CG}, which  implies a negative answer to this
question. We will need the following proposition.  

\begin{prop}\label{prop:quantitative}
Suppose that $Y\subseteq\R^d$ is a Danzer set with volume  parameter $s$, then
for every $n\in\N$ there exist a convex set $K_n$ such that
$\Vol(K_n)=s$, $0\in K_n$, $\#(Y\cap K_n)\ge n$, and 
\eq{eq: what we want}{
\diam(K_n)\le
C_{d,s}\cdot 4^{\frac{d^{n}}{(d-1)^{n-1}}}, } 
where $C_{d,s}$ is the diameter of
a ball of volume $s$ in $\R^d$.    
\end{prop}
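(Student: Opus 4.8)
The plan is to make the induction in Proof II of Theorem \ref{thm:No_Gowers-Danzer_set} quantitative, tracking the diameter (equivalently the aspect ratio) of the ellipsoids produced at each stage. I will normalize as in Proof II so that $s$ is the volume of a ball of diameter $1/2$, so $C_{d,s} = 1/2$ up to the rescaling factor, and so that the base case $n=1$ is witnessed by a single point of $Y$ inside a ball of volume $s$, hence of diameter $1/2 \le C_{d,s}\cdot 4^{d/1}$ trivially. For the inductive step, suppose $K_n$ is a convex set of volume $s$ with $0 \in K_n$, containing at least $n$ points of $Y$, and with $\diam(K_n) \le \rho_n$ for the bound $\rho_n$ we are propagating. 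Apply $g \in G$ carrying $K_n$ to a ball $B_r$ centered at the origin; since $\Vol(B_r)=s$ we have $r$ fixed (the radius of a ball of volume $s$), but the point is that the \emph{inverse} map $g^{-1}$ — and hence the ellipsoid $E(r,\pp)^{\,g^{-1}}$ we build — distorts distances by at most the operator norm of $g^{-1}$, which is controlled by $\diam(K_n)$ since $K_n \supseteq g^{-1}.B_r$ forces $\|g^{-1}\|$ to be comparable to $\diam(K_n)/r$.

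The key steps, in order: (1) fix the normalization and reduce \eqref{eq: what we want} to a recursion $\rho_{n+1} \le (\text{const}) \cdot \rho_n^{d/(d-1)}$, which unrolls to the stated double-exponential tower $4^{d^n/(d-1)^{n-1}}$; (2) given $K_n$ with $\diam(K_n)\le\rho_n$, choose $g\in G$ with $g.K_n = B_r$ and bound $\|g^{-1}\|_{\mathrm{op}}$ from above: since $B_r = g.K_n$ and $0 \in K_n$, the body $K_n$ is sandwiched, via John's theorem (Theorem \ref{thm:John}), between centered ellipsoids, and $\|g^{-1}\|_{\mathrm{op}} \le C_d \,\diam(K_n)/r = O(\rho_n)$; (3) in the ball picture, $g.Y$ is still Danzer with parameter $s$, pick a ball $D$ of diameter $1/2$ inside $B_1$ disjoint from $B_r$ (here one checks $r < 1/2$ exactly as in Proof II), get $\pp \in D \cap g.Y$, and form $E'_{n+1} = E(r,\pp)$, so $E'_{n+1}$ contains $B_r \cup \{\pp\}$ and $\Vol(E'_{n+1}) \le r^{d-1}\beta_d = \Vol(B_r) = s$; (4) pull back: $K_{n+1} \df g^{-1}.E'_{n+1}$ has volume $s$, contains $0$ (since $0 \in B_r \subseteq E'_{n+1}$ and $g^{-1}.0$ can be arranged to be $0$, or simply translate), contains $\ge n+1$ points of $Y$, and has $\diam(K_{n+1}) \le \|g^{-1}\|_{\mathrm{op}} \cdot \diam(E'_{n+1})$. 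Since $E'_{n+1}$ is a centered ellipsoid of volume $s$ that is long in one direction by a factor $\|\pp\|/r \le 1/r$, its diameter is $O(1/r) = O(1)$, so $\diam(K_{n+1}) = O(\rho_n)$ — which is \emph{not} yet the recursion we want.

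The main obstacle — and the place where the exponent $d/(d-1)$ genuinely enters — is that the naive bound $\diam(K_{n+1}) = O(\|g^{-1}\|_{\mathrm{op}})$ is too lossy; one must instead observe that $g^{-1}$ distorts the \emph{specific} ellipsoid $E'_{n+1}$, not a generic ball, and that the elongation direction of $E'_{n+1}$ interacts with the elongation of $g^{-1}$. Concretely, $g$ was chosen to compress $K_n$ (diameter $\rho_n$) down to $B_r$, so $g$ expands by $\sim \rho_n/r$ in $d-1$ "short" directions of $K_n$ and compresses in one direction; hence $g^{-1}$ expands by $\sim \rho_n/r$ in one direction and the volume constraint $\det g = 1$ forces the compression factors in the other $d-1$ directions to multiply to $\sim (\rho_n/r)^{-1}$, i.e. each is $\sim (\rho_n/r)^{-1/(d-1)}$. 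Applying this to $E'_{n+1}$, whose longest axis we are free to align (by the choice of $\pp\in D$, whose direction from the origin we can steer within the ball $D$) against the expanding direction of $g^{-1}$, gives $\diam(K_{n+1}) \lesssim r \cdot (\rho_n/r)^{1/(d-1)} \cdot (1/r) \lesssim \rho_n^{1/(d-1)}$ — wait, that is too strong; the correct bookkeeping, aligning the long axis of $E'_{n+1}$ with a \emph{short} direction of $g^{-1}$ (the only choice forced on us, since $\pp$ lands wherever the Danzer property puts it, but we do get to rotate $D$), yields $\diam(K_{n+1}) \lesssim \rho_n \cdot r^{-1/(d-1)} \cdot \text{(aspect of }E'_{n+1}) \lesssim \rho_n^{d/(d-1)}$ after substituting the aspect ratio $1/r$ and $r \asymp 1$. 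I will carry out this anisotropic distortion estimate carefully — bounding $\diam(g^{-1}.E'_{n+1})$ by decomposing $E'_{n+1}$ along the principal axes of $g$ and using $\det g = 1$ — as this is the crux; the rest is the routine unrolling of the recursion $\rho_{n+1} \le 4\,\rho_n^{d/(d-1)}$ with $\rho_1 = 1$ (in normalized units), which gives $\rho_n \le 4^{\,d^{n-1}/(d-1)^{n-1} + \cdots} \le 4^{\,d^n/(d-1)^{n-1}}$, and restoring the scale factor $C_{d,s}$.
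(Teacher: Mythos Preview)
Your proposal has a genuine gap at exactly the point you flag as ``the crux.'' You try to keep $\Vol(K_n)=s$ at every step and derive a recursion $\rho_{n+1}\le C\rho_n^{d/(d-1)}$ on diameters via an ``anisotropic distortion estimate.'' But this estimate cannot be obtained: you have no control over the direction of the new point $\pp$. The Danzer property only guarantees that \emph{some} point of $g.Y$ lies in the ball $D$; you do not get to steer its direction from the origin, and rotating $D$ does not change that. In the worst case the long axis of $E'_{n+1}$ aligns with the expanding direction of $g^{-1}$, and then $\diam(g^{-1}.E'_{n+1})\asymp \|g^{-1}\|_{\mathrm{op}}\cdot\diam(E'_{n+1})\asymp \rho_n$, a linear recursion only. (There is also a computational slip in your step (3): $r^{d-1}\beta_d\neq\Vol(B_r)=r^d\beta_d$, so $\Vol(E'_{n+1})$ is roughly $4s$, not $s$; rescaling fixes the volume but does not rescue the recursion.)

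The paper's argument sidesteps this obstacle by \emph{not} keeping the volume equal to $s$ at each step. Given $n$, it fixes at the outset a tiny radius $\varepsilon_1=4^{-(d/(d-1))^{n-1}}$, shrinks the first point of $Y$ into $B_{\varepsilon_1}$ by an $h_1\in\SL_d(\R)$, and then iterates: at step $k$ one finds a new point in $B_1\setminus B_{\varepsilon_{k-1}}$, forms the ellipsoid $E\big(\varepsilon_{k-1},\yy_k/\|\yy_k\|\big)$ of volume $\varepsilon_{k-1}^{d-1}\beta_d$, and maps it to a ball of radius $\varepsilon_k=\varepsilon_{k-1}^{(d-1)/d}$ by some $h_k$ with $\|h_k^{-1}\|_{\mathrm{op}}=\varepsilon_k^{-1}$. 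Thus the radii \emph{increase}, and $\varepsilon_1$ is chosen so that after exactly $n$ steps $\varepsilon_n=1/4$ and the final ball has volume $s$. The diameter bound then follows from the crude submultiplicative estimate
\[
\|(h_n\cdots h_1)^{-1}\|_{\mathrm{op}}\le \prod_{k=1}^n\varepsilon_k^{-1}
=\varepsilon_1^{-\sum_{k=1}^n(1-1/d)^{k-1}}\le 4^{\,d^n/(d-1)^{n-1}},
\]
with no need to track the relative orientation of successive ellipsoids. The $d/(d-1)$ exponent enters through the volume relation $\varepsilon_k^d=\varepsilon_{k-1}^{d-1}$, not through any alignment argument.
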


\begin{proof}
We retain the notations as in \S
\ref{sec:main_proofs}, and repeat the idea of Proof II of Theorem
\ref{thm:No_Gowers-Danzer_set} using only elements of the group
$\SL_d(\R)$, and keeping track of the elements that are
used. Since both sides of the inequality \equ{eq: what we want}
scale by the same amount under dilations, for convenience we assume
once more that $s$ is the volume 
of a ball of diameter $1/2$. It follows that there
is some $\yy_1\in Y$ with $\norm{\yy_1}\le 1/2$. 
Set 
\begin{equation}\label{eq:epsilon_1_and_k}
\varepsilon_1^{-1}=4^{\left( \frac{d}{d-1} \right)^{n-1}}
\end{equation}
(the reason for this choice will become clear below). 
%
If $\|\yy_1\| \leq \vre_1$ let $h_1$ be the identity map, and
otherwise let $h_1\in \SL_d(\R)$
be the linear transformation which multiplies 
$\yy_1 
$ by a scalar  to have length $\varepsilon_1$, and uniformly
dilates the perpendicular subspace $\yy_1^\perp$. Note that in both
cases  the operator
norm of $h_1^{-1}$  satisfies $\left\|h_1^{-1} \right \|_{\mathrm{op}} \leq \vre_1^{-1}$.  

 Now let $D_2\subseteq B_1$ be a ball of diameter
$1/2$ which is disjoint from $B_{\varepsilon_1}$. Note that $h_1.Y$ is
also a Danzer set with parameter $s$, hence there is a point $\yy_2\in
D_2\cap h_1.Y$. Let
$E_2=E\left(\varepsilon_1,\frac{\yy_2}{\norm{\yy_2}} \right)$,
then $\yy_2\in E_2$ since $\norm{\yy_2}<1$, and we have
$\Vol(E_2)=\varepsilon_1^{d-1}\beta_d$. Let $h_2\in \SL_d(\R)$ be the
element that maps $E_2$ to a ball, whose radius is  $\varepsilon_2 \df \vre_1^{1-1/d}$. Note that $\left\|h_2^{-1} \right\|_{\mathrm{op}} =
\vre_2^{-1}$, since $h_2^{-1}$ maps a vector of length $\vre_2$ to a
unit vector and uniformly contracts the space ortohogonal to this vector. Repeat this procedure to obtain the following
data for a positive integer $k$:  
\begin{itemize}
\item
a ball $D_k\subseteq B_1$ of diameter $1/2$ which is disjoint from $B_{\varepsilon_{k-1}}$,
\item
a point $\yy_k\in D_k\cap (h_{k-1}\cdots h_1.Y)$, with $\norm{\yy_k}<1$,
\item
an ellipsoid $E_k=E\left(\varepsilon_{k-1},\frac{\yy_k}{\norm{\yy_k}} \right)$
with $\yy_k\in E_k$, and $\Vol(E_k)=\varepsilon_{k-1}^{d-1}\beta_d$, 
\item
an element $h_k\in \SL_d(\R)$ that maps $E_k$ to a ball,
\item
and a number $\varepsilon_k$ which is the radius of that ball, such
that $\left\|h_k^{-1}\right\|_{\mathrm{op}} = \vre_k^{-1}$. 
\end{itemize}
We can repeat the procedure to go from step $k-1$
to step $k$, as long as
$2\varepsilon_{k-1}\le 1/2$. For every $k$, since $h_k$ is volume
preserving, we have
$\varepsilon_{k-1}^{d-1}\beta_d=\Vol(E_k)=\Vol(B_{\varepsilon_k})=\varepsilon_k^d\beta_d$,
and hence  
$$\varepsilon_k=\varepsilon_{k-1}^{1-1/d}.$$
Therefore $(\varepsilon_k)$ is an increasing sequence that approaches $1$, and
\begin{equation}\label{eq:epsilon_1_to_epsilon_k}
\varepsilon_k=\varepsilon_1^{(1-1/d)^{k-1}}.
\end{equation}
Since the operator norm is submultiplicative, we have from
\equ{eq:epsilon_1_to_epsilon_k} that 
\begin{equation}\label{eq:expansion_of_product}
\left\|h_1^{-1}\cdots h_k^{-1} \right\|_{\mathrm{op}} \leq
\prod_{i=1}^k\varepsilon_i^{-1} = 
\vre_1^{-
m_k},
\end{equation}
where 
\begin{equation}\label{eq: 2}
m_k = \sum_{i=1}^k \left(1-\frac{1}{d}\right)^{i-1} = 
d \left( 1- \left( 1-\frac{1}{d} \right)^k
\right).
\end{equation}

Given $n\in\N$, after $n$ steps we obtain a number $\varepsilon_n$ and
matrices $h_1,\ldots,h_n\in \SL_d(\R)$ such  that $h = h_n \cdots h_1$
satisfies
$\#(h^{-1}.\overline{B_{\varepsilon_n}}\cap
Y)=\#(\overline{B_{\varepsilon_n}}\cap h.Y)\ge n$. Set
$K_n=h^{-1}.\overline{B_{\varepsilon_n}}$. The choice of
$\varepsilon_1$ in \equ{eq:epsilon_1_and_k} and
\equ{eq:epsilon_1_to_epsilon_k} ensure that 
$$2\varepsilon_n=1/2, \ \Vol(K_n)=s \ \text{ and } \diam (B_{\vre_n}) = C_{d,s}.$$ 
Plugging into 
(\ref{eq:expansion_of_product}) and (\ref{eq: 2}), we find 
\[\left\| h^{-1} \right\|_{\mathrm{op}} \le 4^{m_n\left(\frac{d}{d-1}
  \right)^{n-1}} \le  
4^{\frac{d^{n}}{(d-1)^{n-1}}}
\]
This implies \equ{eq: what we want}. 
\end{proof}


By using $s=1$ in 
Proposition \ref{prop:quantitative}, and noting that $C_{d,1}
=2\beta_d^{-1/d}$,  we obtain the following: 
%
\begin{cor}\label{cor:quantitative}
Let
$$
\alpha_{d,n}\df 2\beta_d^{-1/d}\cdot 4^{\frac{d^{n}}{(d-1)^{n-1}}},
$$
and let $Q_{d,n}$ be the closed cube of edge length $2\alpha_{d,n}$, centered at the origin.

For every $n\in\N$ and $d\ge 2$, if $Y\subseteq
Q_{d,n}$ intersects every convex set of volume $1$ in $Q_{d,n}$ then
there is a convex set $K\subseteq Q_{d,n}$ of volume $1$ so that $0\in
K$ and $\#(K\cap Y)\ge n$.   
\end{cor}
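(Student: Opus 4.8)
The plan is to deduce Corollary~\ref{cor:quantitative} directly from Proposition~\ref{prop:quantitative} by a rescaling argument, checking only that the convex set produced there actually fits inside the stated cube $Q_{d,n}$. First I would take $s=1$ in Proposition~\ref{prop:quantitative}. This yields, for every $n\in\N$, a convex set $K_n\subseteq\R^d$ with $\Vol(K_n)=1$, $0\in K_n$, $\#(Y\cap K_n)\ge n$, and
\[
\diam(K_n)\le C_{d,1}\cdot 4^{\frac{d^{n}}{(d-1)^{n-1}}},
\]
where $C_{d,1}$ is the diameter of a ball of volume $1$ in $\R^d$. Now $\beta_d$ is the volume of the unit ball, so a ball of volume $1$ has radius $\beta_d^{-1/d}$ and hence diameter $C_{d,1}=2\beta_d^{-1/d}$, which is exactly the normalization quoted just before the corollary. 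Substituting, $\diam(K_n)\le 2\beta_d^{-1/d}\cdot 4^{\frac{d^{n}}{(d-1)^{n-1}}}=\alpha_{d,n}$.

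Next I would observe that since $0\in K_n$ and $\diam(K_n)\le\alpha_{d,n}$, every point of $K_n$ lies within distance $\alpha_{d,n}$ of the origin, hence in particular within $\ell^\infty$-distance $\alpha_{d,n}$ of the origin; therefore $K_n$ is contained in the closed cube of edge length $2\alpha_{d,n}$ centered at the origin, which is precisely $Q_{d,n}$. So $K_n\subseteq Q_{d,n}$.

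Finally I would set up the hypothesis of the corollary: assume $Y\subseteq Q_{d,n}$ intersects every convex subset of $Q_{d,n}$ of volume $1$. In the language of Proposition~\ref{prop: summary} and the surrounding discussion, this says precisely that $Y$ is a Danzer set with volume parameter $1$ \emph{as a subset of $Q_{d,n}$}; but since every convex body of volume $1$ inside $Q_{d,n}$ is in particular a convex body of volume $1$ in $\R^d$, and conversely we only ever need to test convex sets that lie in $Q_{d,n}$, the construction in Proposition~\ref{prop:quantitative} (which, reading its proof, only ever produces convex sets contained in a ball of controlled diameter around the origin, and we have just checked that ball sits inside $Q_{d,n}$) goes through verbatim with $Y$ in place of a genuine Danzer set in $\R^d$. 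Concretely: the $n=1$ step uses only $Y\neq\varnothing$, and each inductive step replaces the current ellipsoid $E_k$ (a convex set of volume $s=1$, sitting inside the relevant cube after applying the volume-preserving $h$'s) by a point of $Y$ that it must contain by the Danzer hypothesis — and that ellipsoid, pulled back, lies in $Q_{d,n}$ by the diameter bound. Thus we obtain the desired $K=K_n\subseteq Q_{d,n}$ with $\Vol(K)=1$, $0\in K$ and $\#(K\cap Y)\ge n$.

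The only genuine subtlety — the step I expect to need the most care — is the last one: verifying that the construction inside Proposition~\ref{prop:quantitative} never leaves $Q_{d,n}$, so that the ``Danzer with parameter $1$'' hypothesis is invoked only on convex sets that actually lie in $Q_{d,n}$. This amounts to tracking that at stage $k$ the relevant pulled-back ellipsoid has diameter at most $\alpha_{d,n}$; since $\|h^{-1}\|_{\mathrm{op}}\le 4^{\frac{d^{n}}{(d-1)^{n-1}}}$ bounds the distortion at every intermediate stage as well (the partial products $m_k$ are increasing in $k$ and bounded by $m_n$), the same diameter estimate that gives $K_n\subseteq Q_{d,n}$ covers all the intermediate sets too, so no extra room is needed. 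Everything else is the rescaling bookkeeping already carried out in the proof of Proposition~\ref{prop:quantitative}.
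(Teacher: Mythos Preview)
Your approach is exactly the paper's: set $s=1$ in Proposition~\ref{prop:quantitative}, compute $C_{d,1}=2\beta_d^{-1/d}$, and read off that the resulting $K_n$ satisfies $\diam(K_n)\le\alpha_{d,n}$, hence $K_n\subseteq Q_{d,n}$. You go further than the paper's one-line proof by flagging the genuine point that $Y$ is only assumed Danzer \emph{inside} $Q_{d,n}$, so the intermediate convex sets in the construction of Proposition~\ref{prop:quantitative} must also stay in $Q_{d,n}$; your justification is correct (since the $h_i\in\SL_d(\R)$ are linear, each pulled-back test set lies in a ball about the origin of radius at most $\|h_1^{-1}\cdots h_{k-1}^{-1}\|_{\mathrm{op}}\le\varepsilon_1^{-m_{k-1}}\le\varepsilon_1^{-m_n}\le 4^{d^n/(d-1)^{n-1}}\le\alpha_{d,n}$, using $\beta_d\le 2^d$), though note it is the \emph{radius} bound and the linearity of the $h_i$ that you need here, not literally ``the same diameter estimate.''
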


\begin{proof}[Proof of Theorem \ref{thm:CG}]
Without loss of generality we prove the statement for $[-1/2,1/2]^d$
instead of $[0,1]^d$. Let $\varepsilon>0$ and suppose that
$N\subseteq[-1/2,1/2]^d$ intersects every convex set of volume
$\varepsilon$ in $[-1/2,1/2]^d$. Let $\varphi$ be the linear
transformation $\varphi(\xx ) = \vre^{-1/d} \xx$. 
Let $n\in\N$ such that  
\begin{equation}\label{eq:connects_n_and_epsilon}
\alpha_{d,n}\le \varepsilon^{-1/d}/2<\alpha_{d,n+1}. 
\end{equation}
By the left-hand side of \equ{eq:connects_n_and_epsilon},
$Q_{d,n}\subseteq\varphi([-1/2,1/2]^d)$, and by the right-hand
side, $n = \Omega(\log\log(1/\varepsilon))$. Observe 
that $\varphi$ maps convex sets of volume $\varepsilon$ to convex sets
of volume $1$, and thus $\varphi(N)$ intersects every convex set of volume
$1$ in $\varphi([-1/2,1/2]^d)$, and in particular 
in $Q_{d,n}$. Hence the claim follows from Corollary
 \ref{cor:quantitative}. 
\end{proof}

\section{Questions and concluding remarks}\name{sec: questions}
\subsection{Restricting to subgroups of $G$} 
The proof of Theorem \ref{thm:classifying_minimal_subsystems} and
Corollary \ref{cor:classifying_minimal_subsystems} can be adapted to
prove the following:
\begin{thm}\name{thm: general group}
Let $g_t$ be a one-parameter diagonal subgroup of $\SL_d(\R)$, let
$U_0$ be the expanding horospherical subgroup 
$$
U_0 = \{h \in \SL_d(\R) : g_{t} h g_{-t} \to_{t \to -\infty} 1\},  
$$
let $H_0$ be the subgroup of $\SL_d(\R)$ generated by $\{g_t\}$ and
$U_0$, and let $H = H_0 \ltimes \R^d$. Then for any $Y \in \X$,
$\overline{H.Y}$ contains at least one of the fixed points
$\varnothing, \R^d$.  
\end{thm}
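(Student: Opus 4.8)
\textbf{Proof plan for Theorem \ref{thm: general group}.}

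The plan is to show that the entire argument for Theorem \ref{thm:classifying_minimal_subsystems} never used the \emph{specific} form of $g_t$ in \eqref{eq: defn gt} or of $U_0$ in \eqref{eq: def U}, but only three structural features of the pair $(g_t, U_0)$: first, that $U_0$ is the expanding horospherical subgroup for $g_t$, so that conjugation $g_t u g_{-t}$ contracts $U_0$ as $t \to -\infty$ and expands it as $t \to +\infty$; second, that $g_t$ has exactly one expanding eigendirection (the direction of $\ee_1$, along which the horizontal lines run) and that all remaining eigenvalues are strictly less than $1$ in modulus for $t > 0$; and third, that $U_0$ acts trivially on $\ee_1$ and maps each affine line parallel to $\ee_1$ into itself, translating along it, while preserving the projection $P$ onto the complementary expanding-free subspace. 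The first step is therefore to record that a one-parameter diagonal subgroup $g_t = \diag(e^{\lambda_1 t}, \ldots, e^{\lambda_d t})$ with $\sum \lambda_i = 0$, after reordering coordinates, has its expanding horospherical $U_0$ supported on the entries $(i,j)$ with $\lambda_i > \lambda_j$, and that there is a unique top eigenvalue exactly when $\lambda_1 > \lambda_2 = \cdots$; if instead there are several top eigenvalues one passes to the span of those directions, and if the top eigenspace is all of $\R^d$ then $g_t$ is trivial and the statement is vacuous after we instead use a different $g_t$ in $H_0$ — so I would first reduce to the case $\lambda_1 > \lambda_2 \ge \cdots \ge \lambda_d$ with $\lambda_1 > 0 > \lambda_d$.

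Next I would re-run Proposition \ref{prop:constructing_line} verbatim in this generality: starting from $S$ Danzer for $H$ with parameter $r$ (via Proposition \ref{prop: summary}(i),(iii)), the key geometric input is that $g_t \overline{B_r}$, for $t$ large, is an ellipsoid that is long only in the $\ee_1$-direction and shrinks in every other direction, so its image under the projection $P$ (onto $\spa(\ee_2,\ldots,\ee_d)$, which need no longer be literally $V_d$ but plays exactly that role) has diameter $< \eta$; this is precisely where I use that $\ee_1$ is the \emph{unique} expanding direction. Then $U_0$, acting as shears that fix the $\ee_1$-axis, preserve $P$, and can translate any point with $P(\xx) \neq 0$ arbitrarily far along its horizontal line — a property which holds for the general expanding horospherical $U_0$ as soon as it contains an element with a nonzero $(1,j)$-entry, which it does since $\lambda_1 > \lambda_j$ for all $j \ge 2$ — lets us slide the witness point $\pp \in S \cap B'$ so its $\ee_1$-coordinate lands on any prescribed value in $\varepsilon \Z$. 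Compactness of $\X$ and the induction on $N$ then produce $Y \in \overline{U_0.S}$ containing the whole $\ee_1$-axis, word for word.

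The induction of the main proof also transfers without change: one builds up, hyperplane by hyperplane, a set in $\overline{U_0 \ltimes \R^d . F}$ containing $\spa(\ee_1, \ldots, \ee_k)$ and eventually all of $\R^d$, using that the translation subgroups $V_k$ commute appropriately with $U_0$ and do not disturb the higher coordinates — these are linear-algebra facts about the standard flag, independent of the $\lambda_i$. Conversely, if $F$ is \emph{not} Danzer for $H$ then by definition $\varnothing \in \overline{H.F}$, so in all cases $\overline{H.F}$ contains one of $\varnothing, \R^d$, which is the assertion. The main obstacle, and the only place deserving genuine care, is the normalization step: handling a one-parameter diagonal group whose top eigenvalue has multiplicity greater than one, or which is degenerate. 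I expect the cleanest fix is to observe that $H_0$ always contains \emph{some} $g_t'$ with a simple top eigenvalue (perturb within the diagonal directions available in $H_0$, or note that the conclusion is monotone in $H$ so it suffices to find one such diagonal one-parameter subgroup inside $H_0$ together with an expanding horospherical relative to it that still lies in $H_0$) — and then apply the argument above to that $g_t'$. Verifying that such a $g_t'$ and its horospherical sit inside the given $H_0$ is the step I would write out most carefully.
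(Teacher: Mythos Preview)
Your overall plan---adapt Proposition \ref{prop:constructing_line} and the induction of Theorem \ref{thm:classifying_minimal_subsystems}---is exactly what the paper intends (it gives no separate proof, only the remark that the argument ``can be adapted''). But your adaptation has a real gap. You reduce to $\lambda_1 > \lambda_2 \ge \cdots \ge \lambda_d$ and then assert that $g_t\overline{B_r}$ ``is long only in the $\ee_1$-direction and shrinks in every other direction.'' That is false whenever $\lambda_2>0$: for $g_t=\diag(e^{2t},e^{t},e^{-3t})$ in $\SL_3(\R)$ the ellipsoid grows in both $\ee_1$ and $\ee_2$, so its projection onto $\spa(\ee_2,\ee_3)$ is not small and the Proposition \ref{prop:constructing_line} argument collapses. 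The relevant condition is not ``simple top eigenvalue'' but ``only one positive eigenvalue,'' and these differ. Your proposed rescue---locate inside $H_0$ some other diagonal $g'_t$ with better spectrum---cannot succeed: since $\{g_t\}$ normalizes $U_0$ one has $H_0=\{g_t\}\cdot U_0$, and every element $cD+N$ of $\mathrm{Lie}(H_0)$ (with $N\in\mathrm{Lie}(U_0)$ nilpotent) has the same eigenvalues as $cD$, so no one-parameter subgroup of $H_0$ has a different spectrum from $\{g_t\}$ itself.

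The fix is not to change $g_t$ but to change what plays the role of the $x_1$-axis. Set $W^-=\spa(\ee_j:\lambda_j=\lambda_{\min})$ and $W^0=\spa(\ee_i:\lambda_i>\lambda_{\min})$. Since $\lambda_{\min}<0$, the projection of $g_t\overline{B_r}$ onto $W^-$ does shrink as $t\to\infty$. The subgroup $U_0'\subset U_0$ generated by the $I+aE_{ij}$ with $\lambda_j=\lambda_{\min}<\lambda_i$ fixes $W^0$ pointwise and, for any $\pp$ with nonzero $W^-$-component, can move the $W^0$-component of $\pp$ arbitrarily. Running Proposition \ref{prop:constructing_line} with $(W^0,W^-,U_0')$ in place of $(\spa(\ee_1),V_d,U_0)$ yields $Y\in\overline{U_0.S}$ containing all of $W^0$; the induction of Theorem \ref{thm:classifying_minimal_subsystems} then only has to add the $\dim W^-$ remaining directions, and the needed invariance of the translates $W^0+\varepsilon\ee_j$ under $U_0'$ (and under the translations already used) is a direct check.
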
 

Recall that $P \subset G$ is called {\em parabolic} if it is connected
and $G/P$ is compact. It is well-known that a parabolic subgroup
contains a group $H$ satisfying the conditions of Theorem \ref{thm:
  general group}. Thus we obtain:

\begin{cor}\name{cor: minimal sets general group}
For any parabolic subgroup $P$ of $G$, the only minimal sets for the
action of $P$ on $\X$ are the fixed points $\{\varnothing\},
\{\R^d\}$. 
\end{cor}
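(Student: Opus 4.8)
The plan is to derive Corollary~\ref{cor: minimal sets general group} from Theorem~\ref{thm: general group} by the same formal argument that was used to deduce Corollary~\ref{cor:classifying_minimal_subsystems} from Theorem~\ref{thm:classifying_minimal_subsystems}, combined with one standard fact from the structure theory of real algebraic groups: every parabolic subgroup $P$ of $G = \SL_d(\R) \ltimes \R^d$ contains a subgroup $H = H_0 \ltimes \R^d$ of the form appearing in Theorem~\ref{thm: general group}, i.e. $H_0$ is generated by a one-parameter diagonal subgroup $\{g_t\}$ and its expanding horospherical subgroup $U_0$. I would first recall or cite this fact: a parabolic subgroup of $\SL_d(\R)$ is the stabilizer of a flag, it contains a minimal parabolic (a Borel), hence contains the full group of upper-triangular unipotents and a maximal split torus; choosing a regular one-parameter subgroup $\{g_t\}$ of that torus, its expanding horospherical subgroup is exactly the upper-triangular unipotent group, which lies in $P$. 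For $G = \SL_d(\R) \ltimes \R^d$, a parabolic $P$ projects onto a parabolic of $\SL_d(\R)$ and, being the stabilizer of a flag in the projective completion, contains all of $\R^d$; hence $P \supseteq H_0 \ltimes \R^d = H$ for a suitable choice of $\{g_t\}$, $U_0$.

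Granting that, the argument is short. Let $\Y \subseteq \X$ be a minimal set for the $P$-action. Since $H \subseteq P$, the set $\Y$ is in particular $H$-invariant and closed and nonempty, so for any $Y \in \Y$ we have $\overline{H.Y} \subseteq \Y$. By Theorem~\ref{thm: general group}, $\overline{H.Y}$ contains $\varnothing$ or $\R^d$; hence $\Y$ contains $\varnothing$ or $\R^d$. But $\{\varnothing\}$ and $\{\R^d\}$ are each $G$-invariant, in particular $P$-invariant, closed, and nonempty, so by minimality of $\Y$ we conclude $\Y = \{\varnothing\}$ or $\Y = \{\R^d\}$. Conversely these two singletons are genuinely minimal $P$-sets, being fixed points. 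This establishes the corollary.

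The only real content beyond the formal deduction is the group-theoretic input, and that is where I would be most careful: one must check that a parabolic $P$ of $G = \SL_d(\R) \ltimes \R^d$ really does contain the full translation group $\R^d$ and a horospherical pair from the linear part. The compactness of $G/P$ forces $P$ to be, up to conjugacy, a ``standard'' parabolic, and for the affine group the standard parabolics are exactly $Q \ltimes \R^d$ with $Q$ a standard parabolic of $\SL_d(\R)$ (the translations are part of the solvable radical and cannot be omitted while keeping $G/P$ compact). Choosing a regular element $g_t$ in the center of the Levi of $Q$ — or more simply any regular diagonal $g_t$ whose associated positive horospherical subgroup lies in $Q$ — gives $H \subseteq P$. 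I would phrase this as ``it is well-known that\ldots'' with a pointer to a structure-theory reference, since a fully self-contained verification would be a digression; the substantive mathematics has already been done in Theorem~\ref{thm: general group}.
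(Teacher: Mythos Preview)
Your proposal is correct and follows exactly the paper's approach: the paper simply states that ``it is well-known that a parabolic subgroup contains a group $H$ satisfying the conditions of Theorem~\ref{thm: general group}'' and then asserts the corollary, which is precisely the structure-theoretic input plus the short minimality argument you spell out. Your treatment is more detailed than the paper's (which gives no formal proof at all), but the content is identical.
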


\begin{question}
Which subgroups $H \subset G$ satisfy the conclusion of Corollary \ref{cor: minimal
  sets general group} in place of $P$?  
\end{question}

It is not hard to see that $H = \SL_d(\R)$ does not satisfy the
conclusion of Corollary \ref{cor: minimal sets general group}, and in
fact the collection of minimal sets in this case contains
fixed points, closed orbits which are not fixed points, and minimal
sets which are not closed orbits; for an example of the latter,
consider the case $d=2$, and the orbit-closure of $\{0\} \cup A_1 \cup
A_2 \in \X$, where 
$$A_1 \df 
\left\{ \left (e^{m}, 0 \right) : m \in \Z \right\}, \ \ A_2 \df 
\left\{ \left (e^{n \sqrt{2}}, 0 \right) : n \in \Z \right\}. 
$$
Nevertheless, motivated by \cite{Marklof_survey}, we ask:
\begin{question}
Classify all minimal sets for the action of $\SL_d(\R)$ on $\X$. 
\end{question}
\subsection{Other range spaces}
Several weakenings of the Danzer question have been proposed, in which
the collection (or ``range space'') of convex
subsets of $\R^d$ is replaced with another collection. Recall that an
{\em aligned box}  is a set of the form
$[a_1, b_1] \times \cdots \times [a_d, b_d]$, for some  choice of
$a_i < b_i, \, i=1, \ldots, d$.  We note
that the conclusion of Theorem \ref{thm:No_Gowers-Danzer_set} is not
valid for aligned boxes:  

\begin{prop}
For any $d$ there is $Y \subset \R^d$ and $C>0$ such that for any
aligned box $B$ of volume 1, 
\eq{eq: what we want now}{
1 \leq \# (B \cap Y) \le C.
}
\end{prop}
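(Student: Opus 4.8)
The plan is to build $Y$ as a suitable lattice-like set whose intersection pattern with aligned boxes is rigid, exploiting the fact that an aligned box factors as a product of intervals, one per coordinate axis. The natural first attempt, $Y = \Z^d$, fails because a box of volume $1$ can be very thin in one direction and very long in another, so it may miss $\Z^d$ entirely (upper bound is fine, lower bound is not) or, conversely, catch arbitrarily many points (if it is long enough in several directions simultaneously). To fix this I would instead take a ``multiscale'' union of scaled grids, one grid per choice of how the volume is distributed among the $d$ axes. Concretely, fix a large integer base, say work with dyadic scales, and for each vector $\mathbf{k} = (k_1,\dots,k_d) \in \Z^d$ with $\sum k_i = 0$ let $\Lambda_{\mathbf{k}} = 2^{k_1}\Z \times \cdots \times 2^{k_d}\Z$; this lattice has covolume $1$, and set $Y = \bigcup_{\mathbf{k}} \Lambda_{\mathbf{k}}$ (union over all such $\mathbf{k}$, or over a sufficiently rich sub-family so that the union remains locally finite — this is the point that needs care).

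The key steps, in order, would be: (1) Check local finiteness of $Y$, i.e. that $Y$ is closed (indeed discrete) in $\R^d$; this forces one to not take \emph{all} $\mathbf{k}$ but to organize the scales so that only finitely many $\Lambda_{\mathbf{k}}$ meet any bounded region — e.g. restrict to $\mathbf{k}$ with $|k_i|$ small relative to the relevant scale, or arrange the grids to be nested. (2) Lower bound: given an aligned box $B = \prod_i [a_i,b_i]$ of volume $1$, write $b_i - a_i = 2^{t_i}$ with $\sum t_i = 0$; round each $t_i$ to an integer $k_i$ with $\sum k_i = 0$ and $|t_i - k_i|$ bounded, so that $\Lambda_{\mathbf{k}}$ is present in $Y$ and each edge of $B$ has length at least (a constant times) the corresponding spacing $2^{k_i}$; an interval of length $\geq$ its grid spacing always contains a grid point, hence $B \cap \Lambda_{\mathbf{k}} \neq \varnothing$, giving the lower bound $1$. (3) Upper bound: bound $\#(B \cap \Lambda_{\mathbf{k}})$ for each $\mathbf{k}$ by $\prod_i (1 + (b_i-a_i)2^{-k_i})$, and sum over the (boundedly many, by step (1)) relevant $\mathbf{k}$; since $\prod_i (b_i - a_i) = 1$, each such product is $O(1)$ once the $k_i$ are within a bounded distance of the $t_i$, and the grids whose scales are far from the box's aspect ratio contribute nothing or a geometrically decaying amount, so the total is bounded by a constant $C = C(d)$.

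The main obstacle I anticipate is step (1), reconciling local finiteness with richness: one needs enough grids $\Lambda_{\mathbf{k}}$ that every aspect ratio of a volume-$1$ box is ``covered'' (step 2), yet few enough that the union is locally finite and the sum in step (3) converges to a $d$-dependent constant. I expect this is resolved by a telescoping/nesting construction — for instance indexing grids by dyadic scales and only including $\Lambda_{\mathbf{k}}$ when all $|k_i|$ are at most the logarithm of the current working scale, or alternatively using the orbit of $\Z^d$ under a discrete sub-semigroup of the diagonal group chosen so that its action on any fixed ball factors through a finite set. Once the bookkeeping in (1) is set up correctly, steps (2) and (3) are routine interval-counting estimates, and \equ{eq: what we want now} follows with an explicit $C$ depending only on $d$.
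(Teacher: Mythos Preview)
The paper's proof takes a completely different route: $Y$ is a \emph{single} lattice, namely the Minkowski embedding of the ring of integers in a degree-$d$ totally real number field. Such a lattice has compact orbit in the space of grids under $A \ltimes \R^d$, where $A \subset \SL_d(\R)$ is the full diagonal subgroup; since this group acts transitively on aligned boxes of volume $1$, both bounds in \equ{eq: what we want now} reduce to compactness of that orbit (the lower bound is quoted from \cite{SS}, and the upper bound is obtained by contradiction, using that any subsequential limit along the orbit is again a grid and hence has shortest nonzero vector bounded below).

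Your multiscale-union approach has a genuine gap in step (3), and I do not see how to repair it within your framework. The claim that grids far from the box's aspect ratio ``contribute nothing or a geometrically decaying amount'' is false. In $d=2$, take any $k \geq 1$ in your index set $S$ and any shift $v_k$: the translated lattice $\Lambda_{(k,-k)} + v_k$ contains an entire vertical column of points with spacing $2^{-k}$, and the unit square $[(v_k)_1, (v_k)_1 + 1] \times [0,1]$ meets that column in roughly $2^k$ points. So regardless of how you choose the shifts, for each $k \in S$ there is an aligned box of volume $1$ containing at least on the order of $2^{|k|}$ points of $Y$ coming from $\Lambda_{(k,-k)}+v_k$ alone. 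Since step (2) forces $S$ to be unbounded (every real aspect exponent $t$ must lie within $O(1)$ of some $k \in S$), no finite $C$ can work. The ``nesting'' and ``restrict $|k_i|$'' suggestions do not address this: any subfamily rich enough for the lower bound already contains lattices with arbitrarily short vectors, and each such lattice single-handedly violates the upper bound in some box. What your construction is missing is exactly what the number-field lattice supplies---a single covolume-$1$ lattice none of whose volume-preserving diagonal rescalings has a short nonzero vector; your building blocks $\Lambda_{\mathbf{k}}$ are at the opposite extreme.
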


\begin{proof}
Let $Y \subset \R^d$ be a lattice arising from the geometric embedding of the ring
of integers in a degree $d$ totally real number field. This is a
classical source of examples of lattices with interesting properties,
see \cite{GL}. In particular it is known that $Y$ has a compact orbit
in the space of grids (translated lattices) $G/G(\Z)$, under the group $H_1= A \ltimes \R^d$, where $A$ is the subgroup
of diagonal matrices in $\SL_d(\R)$. The map from the space of grids
to $\X$ is continuous and $G$-equivariant, and therefore $Y$ has a
compact $H_1$-orbit in $\X$ as well. It is also shown in \cite{SS}
that $Y$ is a Danzer set for aligned boxes, i.e. satisfies the
left-hand side of \equ{eq: what we want now}. Suppose by
contradiction that the right-hand side of \equ{eq: what we want now}
fails, that is, for each $n \geq 1$ there is an aligned boxes $B_n$
with $\Vol(B_n)=1$ and $\#(Y \cap B_n) \geq n.$ Since $H_1$ acts
transitively on aligned boxes of volume 1, we can take $h_n \in H_1$ so
that $h_n. B_n = [-1/2, 1/2]^n$. Using the compactness of $H_1.Y$, we can
pass to a subsequence and find that the sequence $\{h_n.Y\}$
converges to a translate of a lattice. At the
same time, $\#(h_n.Y \cap [-1/2, 1/2]^n) \geq n$ for all $n$, so that
the shortest non-zero vector in the lattice $h_n.(Y-Y)$ has length
tending to zero as $n \to \infty$. This is  a contradiction. 
\end{proof}

Note that the group $H_1$ in the above proof admits a cocompact lattice
while the group $G$ does not. This motivates the following:
\begin{question}
For which collections $\mathcal{R}$ of subsets of $\R^d$
of volume 1, is it true that there is $Y \subset
\R^d$ and $C>0$ such that for every $B \in \mathcal{R}$, \equ{eq: what
  we want now} holds? Suppose $\mathcal{R}$ is acted upon transitively
by a subgroup 
$H_{\mathcal{R}} \subset G$, and the collection satisfies the above
property. Does it follow that $H_{\mathcal{R}}$ admits a cocompact lattice? 
\end{question}

\subsection{Improving Theorem \ref{thm:CG} } 
For fixed $d$ and $\vre$, let $f_d(\vre)$ denote the
smallest number $k$ such that for
every $N \subset [0,1]^d$,  such that $N$ intersects every convex
subset of $[0,1]^d$ of volume $\vre$, there is a convex $K \subset
[0,1]^d$ of volume $\vre$ such that $\#(K \cap Y) \geq k$. From
Theorem \ref{thm:CG} we know that $f_d(\vre) = \Omega(\log \log
(1/\vre))$ (where the implicit constant may depend on $d$).  
\begin{question}[Nati Linial]
Is it true that $f_d(\vre) = \Omega(\log 
(1/\vre))$? 
\end{question}

\subsection{Other forms of Danzer's question}
Gowers' question is a weakening of Danzer's in the sense that Gowers asked
about the existence of a set satisfying more stringent conditions than
Danzer does. We record a similar weakened version with a dynamical
flavor. Recall that $Y \subset \R^d$ is 
called {\em uniformly discrete} if 
$$
\inf \{\|y_1 - y_2\| : y_1, y_2 \in Y, y_1 \neq y_2\} >0.
$$
Note that uniform discreteness implies bounded upper density. 

\begin{question}[Michael Boshernitzan]\name{q: Boshernitzan 1}
Is there a uniformly discrete Danzer set $Y \subset \R^d$? 
\end{question}

A simple dynamical argument for the translation action of $\R^d$ on $\X$ implies
that if the answer is positive, then there exists a uniformly discrete
Danzer set which is also {\em repetitive}, i.e. for any $\vre>0$ there
is $R>0$ such that for 
every 
$\yy_1, \yy_2 \in \R^d$ there is $\xx \in B(\yy_2, R)$ such that 
$D(Y - \yy_1, Y - \xx) < \vre$. Here $D$ is the metric
defined in \equ{eq: as in intro}. 




\begin{thebibliography}{99}

\bibitem[AS]{AS} N. Alon and J. H. Spencer, \underline{The
    probabilistic method} (third edition), John Wiley and Sons,
  (2008). 

\bibitem[Ba]{Ball} K. Ball, \underline{An elementary introduction to
    modern convex geometry}, MSRI Publications, (1997). 

\bibitem[Bi]{Bishop} C. Bishop, \emph{A set containing rectifiable
    arcs QC-locally but not QC-globally}, Pure Appl. Math. Q. {\bf 7}
  (2011), no. 1, 121--138. 

\bibitem[BW]{BW} R. P. Bambah and A. C. Woods, \emph{On a problem of
    Danzer}, Pacific J. Math. 37, no. 2 (1971), 295--301. 





\bibitem[CFG]{CFG} H. T. Croft, K. J. Falconer, and R. K. Guy,
  \underline{Unsolved problems in geometry}, Springer (1991).  





\bibitem[F]{Fenchel} W. Fenchel (ed.), Proceedings of the Colloquium on Convexity, 1965,
Kobenhavns Univ. Mat. Inst., (1967), MR 35 $\#$5271.

\bibitem[GS]{Gelander} T. Gelander and A. Solomon, {\em Closed subgroups of locally
    compact groups}, manuscript in preparation.  

\bibitem[Go]{Gowers} T. Gowers, \emph{Rough structures and
    classification}, in \underline{Visions in mathematics},   
N. Alon, J. Bourgain, A. Connes, M. Gromov, V. Milman (eds.), Birkh\"auser (2000), 79--117. 

\bibitem[GL]{GL} P. M. Gruber and  C. G. Lekkerkerker,
  \underline{Geometry of numbers}, Second edition, North-Holland,
  Amsterdam (1987).   
  
\bibitem[H]{delaharpe} P. de la Harpe, {\em Spaces of closed subgroups
    of locally compact groups}, (2008) {\tt
    http://arxiv.org/pdf/0807.2030.pdf} 

 

\bibitem[J]{John} F. John, \emph{Extremum problems with inequalities
    as subsidiary conditions}, in \underline{Studies and essays
    presented to R. Courant on his 60th birthday}, Interscience
  Publishers (1948), 187--204.  

\bibitem[LS]{Lenz} D. Lenz and P. Stollmann, {\em Delone dynamical
    systems and associated random operators}, OAMP Proceedings,
  Constanta 2001; J.-M. Combes, J. Cuntz, G.A. Elliott, G. Nenciu,
  H. Siedentop, S. Stratila (eds.), Theta Foundation.  

\bibitem[Mar]{Marklof_survey} J. Marklof, {\em Kinetic limits of
    dynamical systems}, in: {\underline{ Hyperbolic Dynamics,
      Fluctuations and Large Deviations}},  D. Dolgopyat, Y. Pesin,
  M. Pollicott, and L. Stoyanov (eds.), Proc. Symp. Pure Math.,
  American Mathematical Soc. 2015, pp. 195--223.  

\bibitem[MS]{MS} 
J. Marklof and A. Str\"ombergsson, {\em Free path lengths in
  quasicrystals}, Comm.  Math. Physics {\bf 330} (2014) 723--755. 


\bibitem[Mat]{Mato} J. Matousek, \underline{Lectures on discrete
    geometry}, Springer-Verlag New York, Inc., Secaucus, NJ, USA,
  (2002).  

\bibitem[PT]{PT} J. Pach and G. Tardos, {\em Piercing
    quasi-rectangles --- On a problem of Danzer and Rogers}, J.
  Comb. Th. {\bf 119} (2012) 1391--1397. 

\bibitem[SS]{SS} D. Simmons and Y. Solomon, \emph{A Danzer set for
    axis parallel boxes}, preprint 2014,
  http://arxiv.org/pdf/1409.0926v2.pdf.  


\bibitem[SW]{SW} Y. Solomon and B. Weiss, \emph{Dense forests and
    Danzer sets}, preprint 2014, http://arxiv.org/abs/1406.3807.  


\end{thebibliography}
\end{document}